\theoremstyle{definition}
\newtheorem{theorem}{Theorem}
\let\emph\textbf
\newaliascnt{remark}{theorem}
\newaliascnt{definition}{theorem}
\newaliascnt{proposition}{theorem}
\newaliascnt{lemma}{theorem}
\newaliascnt{corollary}{theorem}
\newaliascnt{example}{theorem}
\newaliascnt{convention}{theorem}
\newaliascnt{todo}{theorem}
\newtheorem{remark}[remark]{Remark}
\newtheorem{definition}[definition]{Definition}
\newtheorem{proposition}[proposition]{Proposition}
\newtheorem{lemma}[lemma]{Lemma}
\numberwithin{equation}{section}
\numberwithin{figure}{section}
\numberwithin{table}{section}
\let\c@table\c@figure\makeatother
\numberwithin{theorem}{section}
\numberwithin{remark}{section}
\numberwithin{definition}{section}
\numberwithin{proposition}{section}
\numberwithin{lemma}{section}
\numberwithin{corollary}{section}
\numberwithin{example}{section}
\numberwithin{convention}{section}
\numberwithin{todo}{section}
\newcommand{\id}{\mathrm{id}}
\newcommand{\Hom}{\operatorname{Hom}}
\newcommand{\Ker}{\operatorname{Ker}}
\newcommand{\cat}[1]{\mathcal{#1}}
\newcommand{\htensor}{\widehat{\otimes}}
\newcommand{\MC}{\operatorname{MC}}
\newcommand{\Gtl}{\operatorname{Gtl}}
\newcommand{\Jac}{\operatorname{Jac}}
\def\H{\operatorname{H}}
\newcommand{\HH}{\operatorname{HH}}
\newcommand{\HC}{\operatorname{HC}}
\newcommand{\vspan}{\operatorname{span}}
\def\Im{\operatorname{Im}}
\newcommand{\running}{~|~}
\newcommand{\cA}{\mathcal{A}}
\newcommand{\tensor}{¤}
\newcommand{\Rpart}{\mathsf{part}}
\newcommand{\Rlist}{\mathsf{list}}
\newcommand{\sporadicP}{\mathbb{S}_{\mathsf{poly}}}
\newcommand{\sporadicG}{\mathbb{S}_{\mathsf{generic}}}
\newcommand{\GtlT}{\operatorname{Gtl}_2}
\newcommand{\odd}{\mathrm{o}}
\newcommand{\even}{\mathrm{e}}
\title{$ L_∞ $-spectral sequences for Hochschild cohomology}
\author{Jasper van de Kreeke}
\date{}
\DeclareRobustCommand{\SkipTocEntry}[5]{}
\begin{document}

\maketitle

\begin{abstract}
Spectral sequences are a common tool to compute cohomology spaces, but higher structure is often lost on the way. In this article we exhibit a strategy to retain the higher structure on the cohomology, which works in case the chain complex with higher structure is presented as the twisting of a simpler chain complex by a Maurer-Cartan element. This works particularly well in case of the Hochschild complex of a minimal $ A_∞ $-category, where the Hochschild complex can be seen as the twisting of the Hochschild complex of the underlying associative algebra or category. As an application, we recover the existing formality result for Hochschild cohomology of wrapped Fukaya categories of punctured surfaces, and provide an $ L_∞ $-quasi-isomorphism between the Hochschild complex and its cohomology.
\end{abstract}

\tableofcontents

\section{Introduction}
Spectral sequences are a classical tool to determine cohomology of double complexes. Starting from a bigraded vector space $ V $ with two anti-commuting differentials $ d_0, d_1 $, one takes cohomology first with respect to $ d_0 $, then with respect to $ d_1 $, and potentially with respect to further differentials:
\begin{center}
\begin{tikzpicture}
\path (0, 0) node[align=center] {$ E^0 = (V, d_0 + d_1) $ \\ Double complex};
\path (2, 0) node {\Large $ \rightsquigarrow $};
\path (4, 0) node[align=center] {$ E^1 = (\H(V, d_0), \overline{d_1}) $ \\ First page complex};
\path (6.3, 0) node {\Large $ \rightsquigarrow $};
\path (9, 0) node [align=center] {$ E^2 = (\H(\H(V, d), \overline{d_1}), d_2) $ \\ Second page complex};
\path (12, 0) node {\Large $ \rightsquigarrow ~~ \cdots $};
\end{tikzpicture}
\end{center}
Modern cohomology theories produce chain complexes $ (V, d_0 + d_1) $ which carry additional higher structure such as an $ A_∞ $-structure or $ L_∞ $-structure. The standard tool to transfer the higher structure to cohomology is by successive homological transfer from one page to the next.

In the present article, we exhibit an alternative approach which applies if the higher algebra structure can be written as a twisting of a simpler structure by a Maurer-Cartan element $ W $. We focus on the case of DG Lie algebras of the form $ (V, d_0 + [W, -], [-, -]) $, in which we obtain a second page that is rather different from classical spectral sequences:
\begin{center}
\begin{tikzpicture}
\path (-1, 0) node[align=center] {$ L_W = (V, d_0 + [W, -], [-, -]) $ \\ Spectral DGLA};
\path (1.8, 0) node {\Large $ \rightsquigarrow $};
\path (4, 0) node[align=center] {$ E^1 = (\H L)_{π^{\MC} (W)} $ \\ First $ L_∞ $-page};
\path (6.2, 0) node {\Large $ \rightsquigarrow $};
\path (9, 0) node [align=center] {$ E^2 = \H((\H L)_{π^{\MC} (W)}) $ \\ Second $ L_∞ $-page};
\end{tikzpicture}
\end{center}
As an application, we compute the Hochschild cohomology of $ A_∞ $-gentle algebras and recover the formality result of \cite{Paper-I} in a more elegant fashion.

\addtocontents{toc}{\SkipTocEntry}
\subsection{Classical transfer of higher structure}
Assume one is given a chain complex with higher multiplicative structure, and a spectral sequences which computes the cohomology merely as a vector space. Then the task at hand is to compute the higher multiplicative structure on the cohomology such that there is a quasi-isomorphism between the complex and the cohomology that respects the higher structure.

The standard tool for this is to transfer the higher structure from the complex successively to the pages $ E^1 $, $ E^2 $ and eventually to the cohomology space. For instance, Cirici and Sopena-Gilboy \cite{Cirici-Sopena} consider the spectral sequence connecting Dolbeault cohomology and de Rham cohomology with the help of a filtered Kadeishvili theorem. Herscovich \cite{Herscovich} provides a further refined theory of $ A_∞ $-structures on spectral sequences. In all cases, the differential on the first page is given by the descent $ \overline{d_1} $ of the second differential to the first page.

\addtocontents{toc}{\SkipTocEntry}
\subsection{The case of twisted structures}
In some situations it seems more natural to consider a different differential on the first page. Indeed, consider the case of a DGLA $ L = (V, d_0, [-, -]) $ and its twisting $ L_W = (V, d_0 + [W, -], [-, -]) $ by a Maurer-Cartan element $ W ∈ \MC(L) $. Homological transfer provides a quasi-isomorphism of $ L_∞ $-algebras $ π: L → \H L $ and a map of Maurer-Cartan elements $ π^{\MC}: \MC(L) → \MC(\H L) $. Embracing $ π $ with $ π^{\MC} (W) $ we get a morphism of $ L_∞ $-algebras $ π_W: L_W → (\H L)_{π^{\MC} (W)} $.

In contrast to classical spectral sequences, the differential on the first page $ (\H L)_{π^{\MC}} $ is given by the embraced differential instead the descent of $ d_1 = [W, -] $ to cohomology. Moreover the map $ π_W $ need not be a quasi-isomorphism. But if it is, then the composition of $ π_W $ and the projection to the minimal model of $ (\H L)_{π^{\MC} (W)} $ yield a quasi-isomorphism between $ L_W $ and a minimal $ L_∞ $-algebra. In this sense, our type of non-classical spectral sequence always degenerates on the second page.

\addtocontents{toc}{\SkipTocEntry}
\subsection{Hochschild cohomology of $ A_∞ $-categories}
The particular application which we have in mind concerns the Hochschild cohomology of an arbitrary minimal $ A_∞ $-category. In this case, the $ A_∞ $-structure can be written as the sum $ μ = μ^2 + μ^{≥3} $ of the ordinary product and the higher products. Therefore the differential on the Hochschild complex splits into two parts as $ d = [μ^2, -] + [μ^{≥3}, -] $, which precisely presents the Hochschild complex of the $ A_∞ $-category as the twisting of the Hochschild complex of the associative category by the Maurer-Cartan element $ μ^{≥3} $. Thanks to numerous results and tools for Hochschild cohomology of associative algebras \cite{Barmeier-Wang, Kontsevich-formality, Bocklandt, Chouhy-Solotar, Bardzell}, it becomes possible to compute Hochschild cohomology and its $ L_∞ $-structure of $ A_∞ $-categories via a two-step procedure.

In this article, we demonstrate this strategy in the case of wrapped Fukaya categories of punctured surfaces, also known as gentle algebras $ \Gtl Σ $ \cite{Bocklandt}. The computation is based on the description of Hochschild cohomology of $ \Gtl Σ $ as associative category due to Bocklandt \cite{Bocklandt}. The approach via spectral sequences was already anticipated in \cite[Theorem 12.22]{Bocklandt-book}, but remained without proof. The Hochschild cohomology of $ \Gtl Σ $ as $ A_∞ $-category was finally computed in \cite{Paper-I}, but the $ L_∞ $-structure was proven to be formal only through a fragile grading argument. The present article aims to recover this formality result in a more elegant way.

\addtocontents{toc}{\SkipTocEntry}
\subsection{Further applications}
It seems likely that the approach exhibited here applies to further DGLAs and situations outside of the DGLA context as well. For instance, consider the Landau-Ginzburg model $ (\Jac Q, ℓ) $ where $ Q $ is a dimer model and $ ℓ ∈ Z(\Jac Q) $ is the standard central element. Wong \cite{Wong} computed the compactly supported Hochschild cohomology $ \HH(\Jac Q, ℓ) $ by means of a spectral sequence in which the second differential is given by commuting with $ ℓ $ in $ \HH(\Jac Q) $. The higher structure on cohomology is missing in \cite{Wong} and may be recovered by interpreting $ \HC(\Jac Q, ℓ) $ as a spectral DGLA. Further situations outside of the DGLA context seem possible, perhaps existing transfers of $ A_∞ $-structure can be reformulated through the lens of twisted structures presented in the present article.

\subsection*{Acknowledgements}
The ideas for the present paper were born during the author's PhD project at the University of Amsterdam under supervision of Raf Bocklandt. Writing the paper was supported by NWO Rubicon grant 019.232EN.029.

\section{Preliminaries}
\label{sec:prelim}
In this section, we recall preliminaries on DG Lie algebras, $ A_∞ $-categories and gentle algebras. We also fix notation and sign conventions. We work over a field of characteristic zero throughout and write $ ℂ $. Textbook references for these materials include \cite{Bocklandt-book}.

\addtocontents{toc}{\SkipTocEntry}
\subsection{The Hochschild complex}
In this section, we recall material on the Hochschild complex of $ A_∞ $-categories. We start by recalling $ A_∞ $-categories, $ L_∞ $-algebras, DGLAs and the Hochschild complex in particular. The sign conventions for $ A_∞ $-categories follow the standard symplectic convention chosen for instance in \cite{HKK}. The sign conventions for $ L_∞ $-algebras are such that an $ L_∞ $-algebra with vanishing higher brackets is precisely a DGLA, as in \cite[version 3]{Barmeier-Wang}. While we present the correct signs for $ A_∞ $-categories, the accuracy of the signs for $ L_∞ $-algebras is not essential for the present paper and will remain sloppy.

\begin{definition}
A ($ ℤ $- or $ ℤ/2ℤ $-graded, strictly unital) \emph{$ A_∞ $-category} $ \cat C $ consists of a collection of objects together with $ ℤ $- or $ ℤ/2ℤ $-graded hom spaces $ \Hom(X, Y) $, distinguished identity morphisms $ \id_X ∈ \Hom^0(X, X) $ for all $ X ∈ \cat C $, together with multilinear higher products
\begin{equation*}
μ^k: \Hom(X_k, X_{k+1}) ¤ … ¤ \Hom(X_1, X_2) → \Hom(X_1, X_{k+1}), \quad k ≥ 1
\end{equation*}
of degree $ 2-k $ such that the $ A_∞ $-relations and strict unitality axioms hold: For every compatible morphisms $ a_1, …, a_k $ we have
\begin{align*}
& \sum_{0 ≤ j < i ≤ k} (-1)^{‖a_n‖ + … + ‖a_1‖} μ(a_k, …, μ(a_i, …, a_{j+1}), a_j, …, a_1) = 0, \\
& μ^2 (a, \id_X) = a, ~ μ^2 (\id_Y, a) = (-1)^{|a|} a, ~ μ^{≥3} (…, \id_X, …) = 0.
\end{align*}
\end{definition}

Next let us recall the notion of DG Lie algebras and Maurer-Cartan elements.

\begin{definition}
A \emph{DG Lie algebra} (DGLA) is a $ ℤ $- or $ ℤ/2ℤ $-graded vector space $ L $ together with a differential $ d: L^i → L^{i+1} $ and a bracket $ [-, -]: L × L → L $ of degree zero satisfying skew-symmetry, Leibniz rule and Jacobi identity:
\begin{align*}
& [a, b] = (-1)^{|a||b| + 1} [b, a], \\
& d([a, b]) = [da, b] + (-1)^{|a|} [a, db], \\
& (-1)^{|a||c|} [a, [b, c]] + (-1)^{|b||a|} [b, [c, a]] + (-1)^{|c||b|} [c, [a, b]] = 0.
\end{align*}
A \emph{Maurer-Cartan element} of $ L $ is an element $ ν ∈ L^1 $ which satisfies the Maurer-Cartan equation
\begin{equation*}
dν + \frac{1}{2} [ν, ν] = 0.
\end{equation*}
The set of Maurer-Cartan elements of $ L $ is denoted $ \MC(L) $.
\end{definition}

\begin{remark}
The notion of DG Lie algebras exists in both $ ℤ $-graded and $ ℤ/2ℤ $-graded context. In the $ ℤ/2ℤ $-graded case, we write the $ L = L^{\even} ⊕ L^{\odd} $ but may equally use integers to indicate the parity. For instance, in case the DGLA $ L $ is $ ℤ/2ℤ $-graded, a Maurer-Cartan element is supposed to lie in $ L^{\odd} $.
\end{remark}

\begin{definition}
Let $ \cat C $ be a $ ℤ $- or $ ℤ/2ℤ $-graded $ A_∞ $-category. Then its Hochschild complex $ \HC(\cat C) $ is the graded vector space
\begin{equation*}
\HC(\cat C) = \prod_{\substack{X_1, …, X_{k+1} ∈ \cat C \\ k ≥ 0}} \Hom\big(\Hom_{\cat C} (X_k, X_{k+1})[1] \tensor … \tensor \Hom_{\cat C} (X_1, X_2)[1], ~ \Hom_{\cat C} (X_1, X_{k+1})[1]\big).
\end{equation*}
Here $ [1] $ denotes the left-shift and $ ‖a‖ = |a| - 1 $ denotes the reduced degree of a morphism $ a ∈ \Hom_{\cat C} (X, Y) $. The grading $ ‖·‖ $ on $ \HC(\cat C) $ is the one induced from the shifted degrees of the hom spaces of $ \cat C $. In other words, we have
\begin{equation*}
‖η(a_k, …, a_1)‖ = ‖η‖ + ‖a_k‖ + … + ‖a_1‖, \quad η ∈ \HC(\cat C).
\end{equation*}
For $ η, ω ∈ \HC(\cat C) $, the Gerstenhaber product $ μ · ω ∈ \HC(\cat C) $ is defined as
\begin{equation*}
(η · ω) (a_k, …, a_1) = \sum (-1)^{(∥a_l∥ + … + \Vert a_1 \Vert)∥ω∥} η(a_k, …, ω(…), a_l, …, a_1).
\end{equation*}
The \emph{Hochschild DGLA} is the following $ ℤ $- or $ ℤ/2ℤ $-graded DGLA structure on $ \HC(\cat C) $: The bracket on $ \HC(\cat C) $ is the Gerstenhaber bracket
\begin{equation*}
[η, ω] = η · ω - (-1)^{‖ω‖ ‖η‖} ω · η.
\end{equation*}
The differential on $ \HC(\cat C) $ consists of commuting with the product $ μ_{\cat C} ∈ \HC^1 (\cat C) $:
\begin{equation*}
dν = [μ_{\cat C}, ν].
\end{equation*}
\end{definition}

We will often encounter a situation in which we provide a list $ R_{\Rlist} $ of individual Hochschild cochains and require to construct a space which contains all of them. In these situations the vector span is often not sufficient and we rather need its completion. We fix this terminology as follows.

\begin{definition}
\label{def:prelim-completion}
Let $ \cat C $ be an $ A_∞ $-category. Let $ R_{\Rlist} ⊂ \HC(\cat C) $ be a set of Hochschild cochains which are individually concentrated in some specific length. For every $ m ≥ 0 $ denote by $ R_{\Rlist, m} ⊂ R_{\Rlist} $ the subset of cochains of length $ m $. Then the \emph{completion} space $ \overline{R_{\Rlist}} ⊂ \HC(\cat C) $ is defined as
\begin{equation*}
\overline{R_{\Rlist}} = \prod_{m = 0}^∞ \vspan(R_{\Rlist, m}) ⊂ \HC(\cat C).
\end{equation*}
\end{definition}

Next we recall $ L_∞ $-algebras and their morphisms. With the present sign convention, a DGLA is simply an $ L_∞ $-algebra without higher products. In particular, the Hochschild DGLA can automatically be regarded as an $ L_∞ $-algebra.

\begin{definition}
An \emph{$ L_∞ $-algebra} is a graded vector space $ L $ together with multilinear maps
\begin{equation*}
l_k: \underbrace{L × … × L}_{k \text{ times}} → L
\end{equation*}
of degree $ 2-k $ which are graded skew-symmetric and satisfy the higher Jacobi identities:
\begin{align*}
&l_k (x_{s(1)}, …, x_{s(k)}) = χ(s) l_k (x_1, …, x_k), \\
&\sum_{\substack{i+j = k+1 \\ i, j ≥ 1}} \sum_{s ∈ S_{i, k-i}} (-1)^{i(n-i)} χ(s) l_j (l_i (x_{s(1)}, …, x_{s(i)}), x_{s(i+1)}, …, x_{s(k)}) = 0.
\end{align*}
Here $ S_{i, k-i} $ denotes the set of shuffles, i.e.~$ s ∈ S_k $ with $ s(1) < … < s(i) $ and $ s(i+1) < … < s(k) $. The sign $ χ(s) $ is the product of the signum of $ s $ and the Koszul sign of $ s $. The Koszul sign of a transposition $ (i ~ j) $ is $ (-1)^{|x_i| |x_j|} $, and this rule is extended multiplicatively to arbitrary permutations.
\end{definition}

Morphisms between $ L_∞ $-algebras are more flexible than between DGLAs, since $ L_∞ $-morphisms are allowed to have higher components, similar to $ A_∞ $-functors.

\begin{definition}
A \emph{morphism of $ L_∞ $-algebras} $ φ: L → L' $ is given by multilinear maps
\begin{equation*}
φ^k: L × … × L → L
\end{equation*}
of degree $ 1-k $ for all $ k ≥ 1 $ such that $ φ(x_{s(1)}, …, x_{s(k)}) = χ(s) φ(x_1, …, x_k) $ for any $ s ∈ S_k $ and
\begin{align*}
&\sum_{\substack{i+j = k+1 \\ i, j ≥ 1}} \sum_{s ∈ S_{i, n-i}} (-1)^{i(k-i)} χ(s) φ(l(x_{s(1)}, …, x_{s(i)}), x_{s(i+1)}, …, x_{s(k)}) \\
&= \sum_{\substack{1 ≤ r ≤ k \\ i_1 + … + i_r = k}} \sum_t (-1)^u χ(t) l'_r (φ(x_{t(1)}, …, x_{t(i_1)}), …, φ(x_{t(i_1 + … + i_{r-1} + 1)}, …, x_{t(k)})),
\end{align*}
where $ t $ runs over all $ (i_1, …, i_r) $-shuffles for which
\begin{equation*}
t(i_1 + … + i_{l-1} + 1) < t(i_1 + … + i_l + 1).
\end{equation*}
and $ u = (r-1) (i_1 - 1) + … + 2 (i_{r-2} - 1) + (i_{r-1} - 1) $. A morphism $ φ: L → L' $ of $ L_∞ $-algebras is a \emph{quasi-isomorphism} if $ φ^1 $ is a quasi-isomorphism of complexes.
\end{definition}

Every $ L_∞ $-algebra $ L $ possesses a minimal model $ \H L $ whose differential vanishes. The minimal model is uniquely determined up to $ L_∞ $-isomorphism and can be determined by means of an explicit construction. References for the analogous statement for $ A_∞ $-algebras include \cite[Section 3.2]{Bocklandt-book} and \cite{Paper-IIA}. We shall fix the terminology as follows.

\begin{definition}
A \emph{homological splitting} of an $ L_∞ $-algebra $ L $ consists of direct sum decomposition $ H ⊕ \Im(d) ⊕ R $ such that $ H ⊕ \Im(d) = \Ker(d) $. The space $ H $ is the \emph{cohomology space} and the space $ R $ is the \emph{complement space}. The inclusion and projection maps are denoted by $ i: H → L $ and $ π: L → H $. The \emph{codifferential} $ h: L → R $ is given by $ h(d(ν)) = ν $ for $ ν ∈ R $ and $ h(ν) = 0 $ for $ ν ∈ H ⊕ R $.
\end{definition}

We recall now the construction of minimal models of $ L_∞ $-algebras. This construction is also known as homological perturbation, and the particular decorated trees are known as Kadeishvili trees. We denote by $ \mathcal{T}_k $ the set of rooted planar trees with $ k $ leaves in which every node has at least $ 2 $ children. For every $ T ∈ \mathcal{T}_k $ and elements $ a_1, … a_k ∈ H $ we define the evaluation $ T_π (a_1, …, a_k) $ as follows: The leaves are decorated with $ a_1, …, a_k $. Every non-leaf non-node is decorated with $ h l^m $ where $ m $ is the number of children at that node. The root is decorated with $ π l^m $ where $ m $ is its number of children. The evaluation $ T_h (a_1, …, a_k) $ is defined similarly by decorating the root with $ h l^m $ instead of $ π l^m $.

\begin{theorem}
Let $ L $ be an $ L_∞ $-algebra with homological splitting $ L = H ⊕ \Im(d) ⊕ R $. Then a minimal $ L_∞ $-structure on $ \H L ≔ H $ is constructed by means of the following formula:
\begin{equation*}
l^k_{\H L} (a_1, …, a_k) = \sum_{T ∈ \mathcal{T}_k} \sum_{σ ∈ S_k} c_{T,σ} T_π (a_{σ(1)}, …, a_{σ(k)}).
\end{equation*}
A quasi-isomorphism $ i: \H L → L $ is constructed by means of
\begin{equation*}
i^k (a_1, …, a_k) = \sum_{T ∈ \mathcal{T}_k} d_{T,σ} T_h (a_1, …, a_k).
\end{equation*}
\end{theorem}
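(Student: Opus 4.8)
The plan is to pass to the coalgebra picture and apply the Homological Perturbation Lemma, then read off the tree formulas by expanding a geometric series. Recall that an $L_\infty$-structure on a graded space $L$ is the same datum as a square-zero coderivation $D$ on the reduced cofree cocommutative coalgebra $\overline{S^c}(L[1])$, and that a \emph{minimal} structure is precisely one whose linear part (the component $l_1 = d$) vanishes. Morphisms of $L_\infty$-algebras correspond bijectively to morphisms of such coalgebras that intertwine the codifferentials. Under this dictionary, constructing the minimal structure on $H$ together with the inclusion $i$ amounts to transferring $D$ across a contraction onto $\overline{S^c}(H[1])$.

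First I would extract from the homological splitting $L = H \oplus \Im(d) \oplus R$ the contraction data on the underlying complex: the inclusion $i$, the projection $\pi$, and the codifferential $h$. By construction these satisfy $\pi i = \id$, $i\pi - \id = dh + hd$, together with the side conditions $h^2 = 0$, $\pi h = 0$ and $h i = 0$, all of which are immediate from the definitions of the splitting and of $h$. Next I would lift this contraction of complexes to a contraction between $\overline{S^c}(L[1])$ and $\overline{S^c}(H[1])$ by the standard tensor trick, and decompose the codifferential as $D = D_1 + \delta$, where $D_1$ is the coderivation induced by the linear part $d$ and $\delta$ is the coderivation induced by the higher brackets $l_{\geq 2}$. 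Here $D_1$ plays the role of the ambient differential and $\delta$ the role of the perturbation.

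Then I would apply the Homological Perturbation Lemma, writing $I$, $P$, $h$ for the coalgebra lifts of $i$, $\pi$ and the homotopy. This produces a perturbed codifferential on $\overline{S^c}(H[1])$ given by the geometric series
\begin{equation*}
D' = P\,\delta\,(\id - h\delta)^{-1} I = \sum_{n \geq 0} P\,\delta\,(h\delta)^n\,I,
\end{equation*}
together with a perturbed inclusion $(\id - h\delta)^{-1} I$. Since $d$ restricts to zero on $H$, the transferred linear part vanishes and $D'$ defines a minimal $L_\infty$-structure, automatically square-zero because the Perturbation Lemma guarantees it. Expanding each term $(h\delta)^n$ on the symmetric coalgebra reorganizes the composites into iterated brackets $l^m$ glued along the homotopy $h$, which is exactly the decoration scheme of the trees $T \in \mathcal{T}_k$: leaves carry the inputs $a_i$, every internal node carries $h\,l^m$ with $m$ its number of children, and the root carries either $\pi\,l^m$, giving $l^k_{\H L}$, or $h\,l^m$ upon extracting the $L$-valued components of the transferred inclusion, giving $i^k$. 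The coefficients $c_{T,\sigma}$ and $d_{T,\sigma}$ collect the Koszul signs produced when the cocommutative coproduct is expressed through a chosen planar representative and summed over $S_k$.

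The one genuinely delicate point is the bookkeeping of these signs. Convergence is not an issue: for each fixed arity $k$ only the finitely many trees in $\mathcal{T}_k$ contribute, so $D'$ and the transferred maps are finite sums in each arity and no completion subtlety arises. The sign combinatorics, on the other hand, is the main obstacle, but as flagged in \autoref{sec:prelim} we treat $L_\infty$-signs loosely; the fully analogous $A_\infty$-computation is carried out in \cite[Section 3.2]{Bocklandt-book} and \cite{Paper-IIA}, and the $L_\infty$-statement follows from it through the (anti)symmetrization correspondence between $A_\infty$- and $L_\infty$-algebras. Finally, that $i$ is a quasi-isomorphism is immediate, since the unique tree in $\mathcal{T}_1$ forces its linear component $i^1$ to equal the inclusion of the cohomology $H$ into $L$.
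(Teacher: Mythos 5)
Your argument is correct and follows exactly the route the paper itself points to: the theorem is stated here without proof, explicitly labelled as homological perturbation with Kadeishvili trees and deferred to the $A_\infty$-analogues in the cited references, and the coalgebra-level Homological Perturbation Lemma you sketch is the standard proof behind those references, with the contraction identities and the minimality/quasi-isomorphism conclusions all checking out against the paper's definition of the homological splitting. The one point worth flagging is that the transferred map $P\,\delta\,(\id - h\delta)^{-1} I$ being a \emph{coderivation} (so that it genuinely defines an $L_\infty$-structure rather than merely a differential) is not automatic from the bare Perturbation Lemma and is the substantive content hidden in your phrase ``standard tensor trick''; given the paper's declared looseness about $L_\infty$-signs, your level of detail is otherwise appropriate.
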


\begin{remark}
The scalars $ c_{T,σ} $ and $ d_{T, σ} $ can be determined explicitly. We have $ μ^1_{\H L} = 0 $ and $ l^2_{\H L} (a_1, a_2) = π l^2 (a_1, a_2) $. Moreover $ i^1 (a_1) = a_1 $.
\end{remark}

\addtocontents{toc}{\SkipTocEntry}
\subsection{Gentle algebras}
In this section, we recall the construction of the $ A_∞ $-gentle algebra from \cite{Bocklandt, Paper-I}. The first step is to define an associative category $ \GtlT Σ $. The second step is to construct an $ A_∞ $-structure on this category, and we denote the resulting $ A_∞ $-category by $ \Gtl Σ $. By means of abuse of terminology, we shall refer to these categories as gentle algebras, for historical reasons \cite{Bocklandt}.
\begin{center}
\begin{tikzpicture}
\path (0, 0) node[align=center] {$ (Σ, M) $ \\ Punctured surface};
\path (2.3, 0) node {\Large $ \rightsquigarrow $};
\path (5, 0) node[align=center] {$ \GtlT Σ $ \\ Classical gentle algebra};
\path (7.5, 0) node {\Large $ \rightsquigarrow $};
\path (9.5, 0) node [align=center] {$ \Gtl Σ $ \\ $ A_∞ $-gentle algebra};
\end{tikzpicture}
\end{center}

\begin{definition}
A \emph{punctured surface} is a closed oriented surface $ Σ $ with a finite set of punctures $ M ⊂ Σ $. We assume that $ |M| ≥ 1 $, or $ |M| ≥ 3 $ if $ Σ $ is a sphere.
\end{definition}

The assumptions on $ |M| $ are cosmetical and explained in \cite{Paper-IIB}.

\begin{definition}
Let $ (Σ, M) $ be a punctured surface. An \emph{arc} in $ Σ $ is a not necessarily closed curve $ γ: [0, 1] → Σ $ running from one puncture to another. An \emph{arc system} on a punctured surface is a finite collection of arcs such that the arcs meet only at the set $ M $ of punctures. Intersections and self-intersections are not allowed. The arc system satisfies the \emph{no monogons or digons} condition \emph{[NMD]} if
\begin{itemize}
\item No arc is a contractible loop in $ Σ \setminus M $.
\item No pair of distinct arcs is homotopic in $ Σ \setminus M $.
\end{itemize}
An arc system is \emph{full} if the arcs cut the surface into contractible pieces, which we call \emph{polygons}. The set of polygons is denoted $ Σ_2 $.
\end{definition}

We are now ready to recall the construction of the category $ \GtlT Σ $. Let $ (Σ, M) $ be a punctured surface and $ \cA $ be a full arc system for $ Σ $ which satisfies [NMD]. The \emph{classical gentle algebra} $ \GtlT Σ $ is a graded associative category defined as follows. Its objects are the arcs $ a ∈ \cA $. A basis for the hom space $ \Hom_{\GtlT Σ} (a, b) $ is given by the set of all angles around punctures from $ a $ to $ b $. This includes \emph{empty angles}, which are the \emph{identities} on the arcs. An angle is \emph{indecomposable} if it is the interior angle of a polygon, otherwise it is \emph{decomposable}. The hom spaces of $ \GtlT Σ $ are not finite-dimensional, in contrast to what is classically called a gentle algebra. The $ ℤ/2ℤ $-grading on $ \GtlT Σ $ is given by the declaring an interior angle of a polygon to have odd degree if the orientations of the two incident arcs agree relative to the boundary of the polygon, see \autoref{fig:prelim-angledeg}. The product $ μ^2 (α, β) $ of two angles $ α, β $ is defined as the concatenation $ αβ $ of $ α $ and $ β $ if both angles wind around the same puncture and $ α $ starts where $ β $ ends:
\begin{equation*}
μ^1 ≔ 0, \quad μ^2 (α, β) ≔ (-1)^{|β|} αβ.
\end{equation*}

\begin{figure}
\centering
\begin{subfigure}{0.2\linewidth}
\centering
\begin{tikzpicture}
\path[draw, ->] (-1, 1) -- (0, 0) coordinate[midway] (2);
\path[draw, ->] (0, 0) -- (1, 1) coordinate[midway] (1);
\path[draw, ->, bend right] (1) to node[midway, below] {$ α $} (2);
\end{tikzpicture}
\caption{odd}
\end{subfigure}
\begin{subfigure}{0.2\linewidth}
\centering
\begin{tikzpicture}
\path[draw, <-] (-1, 1) -- (0, 0) coordinate[midway] (2);
\path[draw, <-] (0, 0) -- (1, 1) coordinate[midway] (1);
\path[draw, ->, bend right] (1) to node[midway, below] {$ α $} (2);
\end{tikzpicture}
\caption{odd}
\end{subfigure}
\begin{subfigure}{0.2\linewidth}
\centering
\begin{tikzpicture}
\path[draw, <-] (-1, 1) -- (0, 0) coordinate[midway] (2);
\path[draw, ->] (0, 0) -- (1, 1) coordinate[midway] (1);
\path[draw, ->, bend right] (1) to node[midway, below] {$ α $} (2);
\end{tikzpicture}
\caption{even}
\end{subfigure}
\begin{subfigure}{0.2\linewidth}
\centering
\begin{tikzpicture}
\path[draw, ->] (-1, 1) -- (0, 0) coordinate[midway] (2);
\path[draw, <-] (0, 0) -- (1, 1) coordinate[midway] (1);
\path[draw, ->, bend right] (1) to node[midway, below] {$ α $} (2);
\end{tikzpicture}
\caption{even}
\end{subfigure}
\caption{Degree of an angle $ α $ in $ \GtlT Σ $}
\label{fig:prelim-angledeg}
\end{figure}

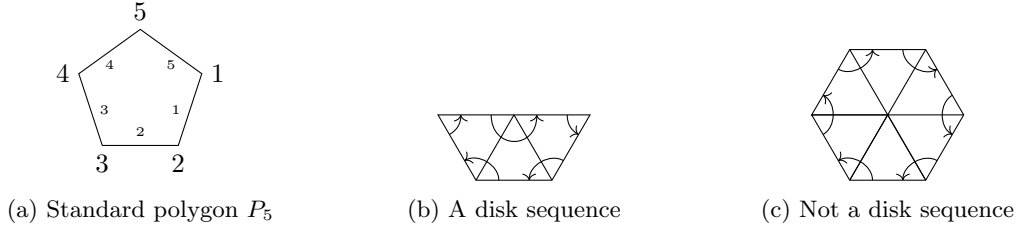
\begin{figure}
\centering
\begin{subfigure}[b]{0.3\linewidth}
\centering
\begin{tikzpicture}
\path[draw] (0, 0) node[below] {3} -- ++(right:1) node[midway, above] {\tiny 2} node[below] {2} -- ++(72:1) node[midway, left] {\tiny 1} node[right] {1} -- ++(144:1) node[midway, below] {\tiny 5} node[above] {5} -- ++(216:1) node[midway, below] {\tiny 4} node[left] {4} -- ++(-72:1) node[midway, right] {\tiny 3};
\end{tikzpicture}
\caption{Standard polygon $ P_5 $}
\label{fig:prelim-gtl-infty-P5}
\end{subfigure}
\begin{subfigure}[b]{0.3\linewidth}
\centering
\begin{tikzpicture}
\path[draw] (0, 0) -- ++(left:1) coordinate[pos=0.3] (1-start) coordinate[pos=0.7] (2-end) -- ++(300:1) coordinate[pos=0.3] (2-start) coordinate[pos=0.7] (3-end) -- ++(right:1) coordinate[pos=0.3] (3-start) coordinate[pos=0.7] (4-end) -- ++(60:1) coordinate[pos=0.3] (4-start) coordinate[pos=0.7] (5-end) -- ++(left:1) coordinate[pos=0.3] (5-start) coordinate[pos=0.7] (1-end);
\path[draw] (0, 0) -- ++(240:1) (0, 0) -- ++(300:1);
\foreach \i in {2, 5} \path[draw, ->, bend right] (\i-start) to (\i-end);
\foreach \i in {3, 4} \path[draw, ->, bend right=60] (\i-start) to (\i-end);
\path[draw, ->, bend right=90, looseness=2] (1-start) to (1-end);
\end{tikzpicture}
\caption{A disk sequence}
\label{fig:prelim-gtl-infty-diskexample}
\end{subfigure}
\begin{subfigure}[b]{0.3\linewidth}
\centering
\begin{tikzpicture}
\path[draw] (0, 0) -- ++(left:1) coordinate (A) -- ++(60:1) coordinate (B) coordinate[pos=0.3] (3-end) coordinate[pos=0.7] (4-start) -- ++(right:1) coordinate[pos=0.3] (4-end) coordinate[pos=0.7] (5-start) coordinate (C) -- ++(300:1) coordinate[pos=0.3] (5-end) coordinate[pos=0.7] (6-start) coordinate (D) -- ++(240:1) coordinate[pos=0.3] (6-end) coordinate[pos=0.7] (7-start) coordinate (E) -- ++(left:1) coordinate[pos=0.4] (1-end) coordinate[pos=0.3] (7-end) coordinate[pos=0.7] (2-start) coordinate[pos=0.6] (8-start) coordinate (F) -- ++(120:1) coordinate[pos=0.3] (2-end) coordinate[pos=0.7] (3-start);
\path[draw] (0, 0) -- (F) coordinate[pos=0.6] (8-end) coordinate[pos=0.3] (9-start);
\path[draw] (0, 0) -- (E) coordinate[pos=0.6] (1-start) coordinate[pos=0.3] (9-end);
\foreach \i in {2, 3, 4, 5, 6, 7} \path[draw, ->, bend right=60] (\i-start) to (\i-end);
\path[draw] (A) -- (D) (B) -- (E) (C) -- (F);
\end{tikzpicture}
\caption{Not a disk sequence}
\label{fig:prelim-gtl-infty-no-disk}
\end{subfigure}
\caption{Illustration of immersed disks}
\label{fig:prelim-gtl-infty-diskillustration}
\end{figure}

The next step is to define an $ A_∞ $-structure on $ \GtlT Σ $ which turns the category into the $ A_∞ $-gentle algebra denoted by $ \Gtl Σ $. The higher products $ μ^{≥3} $ on $ \Gtl Σ $ are defined in terms of \emph{immersed disks}. Roughly speaking, a discrete immersed disk may either be a polygon, or a sequence of polygons stitched together. We make this precise by regarding immersions of the standard polygon $ P_k $, depicted in \autoref{fig:prelim-gtl-infty-diskillustration}:

\begin{definition}
Let $ Σ $ be dimer. An \emph{immersed disk} in $ Σ $ consists of an oriented immersion $ D: P_k → |Σ| $ of a standard polygon $ P_k $ into the surface, such that
\begin{itemize}
\item The edges of the polygon are mapped to a sequence of arcs.
\item The immersion does not cover any punctures.
\end{itemize}
The immersion mapping itself is only taken up to reparametrization. The sequence of \emph{interior angles} of $ D $ is the sequence of angles in $ Σ $ given as images of the interior angles of $ P_k $ under the map $ D $. An angle sequence $ α_1, …, α_k $ is a \emph{disk sequence} if it is the sequence of interior angles of some immersed disk.
\end{definition}

We can now define the category $ \Gtl Σ $ as follows:

\begin{definition}
Let $ (Σ, M) $ be a punctured surface and $ \cA $ be a full arc system with [NMD]. Then the $ A_∞ $-\emph{gentle algebra} $ \Gtl Σ $ of $ Σ $ is the $ A_∞ $-category with objects the arcs $ a ∈ \cA $, hom spaces spanned by angles, and $ A_∞ $-product $ μ $ defined by $ μ^1 = 0 $ and $ μ^2 (α, β) = (-1)^{|β|} αβ $. To define $ μ^{k≥3} $, let $ α_1, …, α_k $ be any disk sequence, let $ β $ be any angle such that $ β α_1 ≠ 0 $, and let $ γ $ be any angle such that $ α_k γ ≠ 0 $. Then
\begin{equation*}
μ^k (β α_k, …, α_1) ≔ β, \quad μ^k (α_k, …, α_1 γ) ≔ (-1)^{|γ|} γ.
\end{equation*}
The higher products vanish on all angle sequences other than these.
\end{definition}

\addtocontents{toc}{\SkipTocEntry}
\subsection{Hochschild cohomology of $ \GtlT Σ $}
Let us recall results on the Hochschild cohomology of $ \GtlT Σ $ from \cite{Bocklandt-book} and \cite[Theorem 12.19]{Bocklandt-book}. The idea is that the category $ \GtlT Σ $ is effectively a graded quiver algebra with relations, and its Hochschild cohomology can be studied with classical tools \cite{Bocklandt}.

We start by recalling the two gradings on $ \HH(\GtlT Σ) $. The first grading is the classical grading by length of cochains. The second grading is the $ ℤ/2ℤ $-grading into even and odd cochains, coming from the reduced degree $ ‖·‖ $ on the Hochschild complex:
\begin{align*}
\HH(\GtlT Σ) &= \HH^0 (\GtlT Σ) ⊕ \HH^1 (\GtlT Σ) ⊕ …, \\
\HH(\GtlT Σ) &= \HH^{\even} (\GtlT Σ) ⊕ \HH^{\odd} (\GtlT Σ).
\end{align*}
We now fixing several pieces of notation and terminology. The \emph{full turn} $ ℓ_q $ around a puncture $ q ∈ M $ is the sum $ ℓ_q = \sum_a ℓ_{q, a} $ over all full turns $ ℓ_{q, a} $ winding around $ q $ and starting at all half-arcs incident at $ a $. Some specific cochains have only a 0-adic or a 1-adic component. Those with only a 0-adic are necessarily central elements, and those with 1-adic component are derivations in the sense that $ ν (αβ) = α ν (β) + α ν (β) $.

The set $ \sporadicG $ is the \emph{generic sporadic set} and is a finite set of scalar-valued functions $ λ $ on the set of angles in $ Σ $. Every function $ λ ∈ \sporadicG $ is additive in the sense that $ λ(αβ) = λ(α) + λ(β) $ whenever $ αβ ≠ 0 $, and satisfies the condition $ \sum_{i = 1}^l λ(α_i) = 0 $ whenever $ α_1, …, α_l $ is an elementary polygon. The set $ \sporadicG $ is chosen precisely so that the cocycles $ ν_λ $, $ ν_B $ and $ ν_{q, k, \odd} $ together form a basis for the 0-adic part of Hochschild cohomology.

There is another finite set $ \sporadicP = \{λ_B\}_{B ∈ Σ_2} $ of additive scalar-valued functions $ λ_B $. For every elementary polygon $ B ∈ Σ_2 $ with interior angles $ β_1, …, β_l $, the function $ λ_B $ has the property that $ \sum_{i = 1}^l λ(β_i) = 1 $, and $ λ $ vanishes on all elementary angles not contained in $ B $.

Finally, for every puncture $ q ∈ M $, one fixes an arbitrary indecomposable angle $ α_q $ which winds around $ q $. We are now ready to recall the following set of Hochschild cocycles. The cocycles of length $ 2 $ and higher were determined in \cite{Bocklandt}, and cocycles in length $ 0, 1 $ were provided in \cite[Theorem 12.19]{Bocklandt-book}.

\begin{definition}
The cochain $ ν_1 $ is simply given by identity element
\begin{equation*}
ν_1^0 = \sum_{a ∈ \cA} \id_a.
\end{equation*}
In these terms, the cochains $ ν_{q, k, \odd} $ are given by
\begin{equation*}
ν_{q, k, \odd}^0 = ℓ_q^k.
\end{equation*}
The generic sporadic set $ \sporadicG $ is of cardinality $ |M|+1 $. For $ λ ∈ \sporadicG $, the cochain $ ν_λ $ only has a 1-adic component given by
\begin{equation*}
ν_λ (α) = λ(α) α.
\end{equation*}
The \emph{polygon sporadic cochains} $ ν_B $ only have a 1-adic component given by
\begin{align*}
ν_B (β) &= λ_B (β) β.
\end{align*}
The cochains $ ν_{q, k, \even} $ are given as follows. Let $ α_1, …, α_l $ be the angles around $ q $ in a chosen order.
\begin{align*}
ν_{q, k, \even} (α_q) &= ℓ_q^k α_q, \\
ν_{q, k, \even} (α) &= 0 \text{ for all indecomposable angles} α ≠ α_q.
\end{align*}
Let $ B $ be an elementary polygon given by angles $ β_1, …, β_l $ where $ β_i: b_i → b_{i+1} $. Let $ k ≥ 1 $ be a natural number. The odd cochain $ ν_{B,k,o} ∈ \HC(\GtlT Σ) $ is defined as follows. If $ β $ is an angle such that $ β β_{i-1} ≠ 0 $ and $ γ $ is an angle such that $ β_i γ ≠ 0 $, then
\begin{align*}
ν_{B,k,o} (β β_{i+kl-1}, …, β_i) &= β, \\
ν_{B,k,o} (β_{i+kl-1}, …, β_i γ) &= (-1)^{|γ|} γ.
\end{align*}
The cochain $ ν_{B,k,o} $ vanishes on all other angle sequences. The even cochain $ ν_{B,k,e} $ is defined as follows. If $ β $ and $ γ $ are angles such that $ β β_i γ ≠ 0 $, then
\begin{align*}
ν_{B,k,e} (β β_{i+kl}, β_{i+kl-1}, …, β_i γ) &= β β_i γ.
\end{align*}
The cochain $ ν_{B,k,e} $ vanishes on all other angle sequences.
\end{definition}

Since the Hochschild cocycles $ ν_{B, k, \odd} $ and $ ν_{B, k, \even} $ were not explicitly constructed in \cite{Bocklandt, Bocklandt-book}, we shall check that they indeed satisfy the cocycle condition and illustrate their functionality.

\begin{lemma}
The elements $ ν_{B, k, \odd} $ and $ ν_{B, k, \even} $ are Hochschild cocycles.
\end{lemma}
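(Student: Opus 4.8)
The plan is to verify the cocycle condition $d\nu = 0$ directly, where $d = [\mu^2, -]$ is the Hochschild differential of the associative category $\GtlT \Sigma$. Since $\nu = \nu_{B,k,\odd}$ is concentrated in length $N = kl$, its coboundary $d\nu = \mu^2 \cdot \nu - (-1)^{\|\nu\|} \nu \cdot \mu^2$ is concentrated in length $N+1$. First I would expand $d\nu$ evaluated on a sequence $(c_{N+1}, \dots, c_1)$ into three families of terms: the left multiplication $c_{N+1} \cdot \nu(c_N, \dots, c_1)$ and the right multiplication $\nu(c_{N+1}, \dots, c_2) \cdot c_1$ coming from $\mu^2 \cdot \nu$, and the $N$ merge terms $\nu(c_{N+1}, \dots, \mu^2(c_{j+1}, c_j), \dots, c_1)$ coming from $\nu \cdot \mu^2$. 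I will refer to the two defining cases of $\nu$ as type~A (protrusion $\beta$ at the top, output $\beta$) and type~B (protrusion $\gamma$ at the bottom, output $(-1)^{|\gamma|}\gamma$).

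The key structural observation is that $\nu$ is supported precisely on $k$-fold traversals of the polygon $B$ carrying at most one protruding angle at one end, so that on any nonzero input all interior entries are forced to be interior angles $\beta_p$ of $B$. Two consecutive interior angles $\beta_{p+1}, \beta_p$ wind around the two distinct punctures at the endpoints of the shared arc $b_{p+1}$, hence are not composable under $\mu^2$. Therefore every merge term at an interior position $1 < j < N$ vanishes, and only the extreme merges (at the top position $j = N$ and the bottom position $j = 1$) can survive alongside the two multiplications.

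Next I would pair the surviving terms. On an input whose bottom $N$ entries form a type~A sequence $(\beta\beta_{i+N-1}, \beta_{i+N-2}, \dots, \beta_i)$, the left multiplication outputs $c_{N+1} \cdot \beta$, while the top merge $\mu^2(c_{N+1}, \beta\beta_{i+N-1})$ reassembles the same type~A sequence with protruding angle $c_{N+1}\beta$ and hence also outputs $c_{N+1}\beta$; since these two terms lie in the opposite sign groups $\mu^2 \cdot \nu$ and $\nu \cdot \mu^2$, they cancel. Symmetrically, the right multiplication cancels against the bottom merge through the type~B sequences. The delicate degenerate case is when the protruding angle is trivial, so that $\nu$ is evaluated on the bare traversal and returns the identity $\id_{b_i}$; here the strict unitality relations $\mu^2(a, \id) = a$ and $\mu^2(\id, a) = a$ are exactly what force the multiplication term to match the corresponding extreme merge.

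For the even cochain $\nu_{B,k,\even}$ the same three-step strategy applies, the only difference being that its supporting sequences $(\beta\beta_{i+kl}, \dots, \beta_i\gamma)$ carry protruding angles at \emph{both} ends; interior merges again vanish by non-composability of consecutive interior angles, and one pairs the two multiplications against the two extreme merges. I expect the main obstacle to be the exhaustive, sign-correct bookkeeping of this boundary pairing across the cyclic seam of the $k$-fold traversal together with the identity/degenerate cases: each individual matching is routine, but verifying that the partners are unique and that the Koszul signs line up is where the real work lies. The clean geometric fact that collapses the whole computation to these boundary terms is the non-composability of consecutive interior angles of $B$.
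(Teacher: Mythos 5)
Your proposal is correct and follows essentially the same route as the paper: a direct verification of $d\nu = 0$ in which the outer multiplications cancel against the extreme merge terms at the two ends of the traversal, with all interior merges vanishing because consecutive interior angles of $B$ sit at distinct punctures and are therefore not composable (a fact the paper leaves implicit in its ``vanishes on all other angle sequences'' remark). The paper simply organizes the computation by enumerating the input types $(\beta', \beta\beta_{i+kl-1},\dots,\beta_i)$ and $(\beta_{i+kl-1},\dots,\beta_i\gamma,\gamma')$ and carrying out the sign bookkeeping you defer, so the two arguments coincide in substance.
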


\begin{proof}
Regard the cochain $ ν_{B,k,o} $ and let $ β', β $ be angles such that $ β' β β_{i-1} ≠ 0 $. Then
\begin{align*}
& (dν_{B,k,o})(β', β β_{i+kl-1}, …, β_i) \\
&= μ^2 (β', ν_{B,k,o} (β β_{i+kl}, …, β_i γ)) - (-1)^{‖β_i‖ + … + ‖β_{i+kl-2}‖} ν_{B,k,o} (μ^2 (β', β β_{i+kl-1}), …, β_i) \\
&= (-1)^{|β|} β' β + (-1)^{|β β_{i-1}| + ‖β_{i-1}‖} β' β = 0.
\end{align*}
We have used that $ ‖β_i‖ + … + ‖β_{i+kl-2}‖ \equiv ‖β_{i-1}‖ $. Let $ γ, γ' $ be angles such that $ β_i γ γ' ≠ 0 $. Then
\begin{align*}
&(dν_{B,k,o})(β_{i+kl-1}, …, β_i γ, γ') \\
&= (-1)^{‖γ'‖} μ^2 (ν_{B,k,o} (β_{i+kl-1}, …, β_i γ), γ') + ν_{B,k,o} (β_{i+kl-1}, …, μ^2 (β_i γ, γ')) \\
&= (-1)^{‖γ'‖ + |γ| + |γ'|} γ γ' + (-1)^{|γ'| + |γ γ'|} γ γ' = 0.
\end{align*}
It is obvious that $ dν_{B,k,o} $ vanishes on all other angle sequences.

We shall evaluate $ dν_{B,k,e} $ on the following types of angle sequences. Let $ β, γ $ be angles and $ γ' $ a non-empty angle such that $ β β_i γ γ' ≠ 0 $. Then
\begin{align*}
(dν_{B,k,e})(β β_{i+kl}, …, β_i γ, γ') &= μ^2 (ν_{B,k,e} (β β_{i+kl}, …, β_i γ), γ') - ν_{B,k,e} (β β_{i+kl}, …, μ^2 (β_i γ, γ')) \\
&= (-1)^{|γ'|} β β_i γ γ' - (-1)^{|γ'|} β β_i γ γ' = 0.
\end{align*}
Let $ β, γ $ be angles and $ γ' $ a non-empty angle such that $ β β_i γ γ' ≠ 0 $. Then
\begin{align*}
(dν_{B,k,e})(β', β β_{i+kl}, …, β_i γ) &= μ^2 (β', ν_{B,k,e} (β β_{i+kl}, …, β_i γ) \\
& \quad - (-1)^{‖β_i γ‖ + … + ‖β_{i+kl-1}‖} ν_{B,k,e} (μ^2 (β', β β_{i+kl}), …, β_i γ) \\
&= (-1)^{|β β_i γ|} β' β β_i γ - (-1)^{|γ| + |β β_i|} β' β β_i γ = 0.
\end{align*}
We have used that $ ‖β_i γ‖ + … + ‖β_{i+kl-1}‖ = |γ| $. It is obvious that $ dν_{B,k,e} $ vanishes on all other types of sequences.
\end{proof}

For the purposes of this paper, we require a description of the Hochschild cohomology $ \HH(\GtlT Σ) $ in the $ ℤ/2ℤ $-grading coming from the Hochschild complex, as opposed to the classical grading by length of chains. To illustrate the difference, note that adding up infinitely many Hochschild cochains of increasingly high length does not yield a Hochschild cohomology element in the classical grading, but does so in the $ ℤ/2ℤ $-grading. As explained in \cite[Section 12.3]{Bocklandt-book}, we arrive at the following description:

\begin{theorem}[\cite{Bocklandt-book}]
The Hochschild cohomology of $ \GtlT Σ $ in $ ℤ/2ℤ $-grading is given by the following completions:
\begin{align*}
\HH^{\even} (\GtlT Σ) &= \overline{\{ν_λ\}_{λ ∈ \sporadicG}, \{ν_B\}_{B ∈ Σ_2}, \{ν_{q, k, \even}\}_{q ∈ M, k ≥ 1}, \{ν_{B, k, \even}\}_{B ∈ Σ_2, k ≥ 1}}, \\
\HH^{\odd} (\GtlT Σ) &= \overline{ν_1, \{ν_{q, k, \odd}\}_{q ∈ M, k ≥ 1}, \{ν_{B, k, \odd}\}_{B ∈ Σ_2}}.
\end{align*}
\end{theorem}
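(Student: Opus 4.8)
The statement is a regrading of Bocklandt's computation, so my plan is to start from the length-graded description of $ \HH(\GtlT Σ) $ and transport it to the $ ℤ/2ℤ $-grading. First I would recall from \cite{Bocklandt, Bocklandt-book} that in the classical grading by length the Hochschild cohomology decomposes as a direct sum $ \bigoplus_m \HH^{(m)} $, with the cohomology in each length spanned by the cocycles listed above. That these cochains are indeed cocycles is the content of the preceding lemma for the families $ ν_{B,k,\odd}, ν_{B,k,\even} $, and a direct or cited verification for $ ν_1, ν_{q,k,\odd}, ν_{q,k,\even}, ν_λ, ν_B $; that their classes form a basis in each length is Bocklandt's classification, which I would simply invoke.

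The heart of the proof is the regrading argument. The key observation is that the Hochschild differential $ d = [μ^2, -] $ raises Hochschild length by exactly one: since $ μ^2 $ has length $ 2 $, both Gerstenhaber products $ μ^2 · ν $ and $ ν · μ^2 $ of $ μ^2 $ with a length-$ m $ cochain land in length $ m+1 $. Consequently, writing $ \HC(\GtlT Σ) = \prod_m \HC^{(m)} $ as in its definition, the differential decomposes as $ d = \sum_m d^{(m)} $ with $ d^{(m)}: \HC^{(m)} → \HC^{(m+1)} $, and the cocycle equation $ dν = 0 $ holds if and only if $ d^{(m)} ν^{(m)} = 0 $ for every length component $ ν^{(m)} $ separately. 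The same locality holds for coboundaries: a primitive for a length-$ (m+1) $ cochain, if it exists, may be taken in length $ m $, and arbitrary collections of such primitives assemble into a genuine element of $ \HC(\GtlT Σ) $ precisely because the latter is a product over lengths. Hence both the cocycle and the coboundary conditions are local in length, and $ \HH(\GtlT Σ) = \prod_m \HH^{(m)} $ also in the $ ℤ/2ℤ $-grading — now a product rather than a direct sum.

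It remains to reorganise this product by parity. For this I would carry out a reduced-degree count for each family, confirming that $ ν_1, ν_{q,k,\odd}, ν_{B,k,\odd} $ carry odd reduced degree while $ ν_λ, ν_B, ν_{q,k,\even}, ν_{B,k,\even} $ carry even reduced degree, and sort them into $ \HH^{\odd} $ and $ \HH^{\even} $ accordingly. Since the families $ ν_{B,k,\odd} $ and $ ν_{B,k,\even} $ occupy strictly increasing lengths as $ k $ grows, the product over lengths is exactly the completion $ \overline{\,\cdot\,} $ of \autoref{def:prelim-completion}, and collecting the generators by parity yields the two displayed formulas.

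The main obstacle I anticipate is the regrading step rather than the individual cocycle checks. One must be careful that passing from the length-graded direct sum to the $ ℤ/2ℤ $-graded product introduces neither spurious cohomology classes nor spurious coboundary relations: this is exactly what the locality-in-length of $ d $ guarantees, but it relies essentially on $ \HC(\GtlT Σ) $ being defined as a product, so that infinite length-sums are legitimate, together with the fact that $ d $ strictly increases length. A secondary point requiring care is completeness — that every $ ℤ/2ℤ $-graded class is represented by such a length-product and that no additional limit classes appear beyond the completion of the listed generators.
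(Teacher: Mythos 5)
Your proposal is correct and takes essentially the same route as the paper, which gives no independent proof beyond the preceding lemma (verifying that $ ν_{B,k,\odd} $ and $ ν_{B,k,\even} $ are cocycles) and a citation of \cite{Bocklandt-book} for the length-graded classification, then repackages the answer in the $ ℤ/2ℤ $-grading via completions. The regrading step you spell out --- that $ d = [μ^2, -] $ raises length by exactly one, so that on the product $ \prod_m \HC^{(m)} $ both the cocycle and coboundary conditions are local in length and the $ ℤ/2ℤ $-graded cohomology is the product of the lengthwise cohomologies --- is precisely the content the paper delegates to that citation.
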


\begin{remark}
To memorize this structure better, one may use the mnemonic
\begin{align*}
\HH^{\even} (\GtlT Σ) &≅ ℂ^{|M|+2g-1} ⊕ ℂ^{|Σ_2|} ⊕ \bigoplus_{q ∈ M} ℂ[ℓ_m] ℓ_m ∂_{ℓ_m} ⊕ \bigoplus_{B ∈ Σ_2} ℂ⟦ℓ_B⟧ ℓ_B ∂_{ℓ_B}, \\
\HH^{\odd} (\GtlT Σ) &≅ \frac{ℂ[ℓ_q \running q ∈ M]⟦ℓ_B \running B ∈ Σ_2⟧}{ℓ_i ℓ_j \running i ≠ j, ~ i,j ∈ M ∪ Σ_2}.
\end{align*}
\end{remark}

\section{Spectral sequences of DGLAs}
In this section, we exhibit our approach to minimal models of twisted DGLAs. The starting point is a DGLA of the form $ L = (L, d + [W, -], [-, -]) $. We show that under technical assumptions on the Maurer-Cartan element $ W $ the morphism $ π_W: L_W → (\H L)_{π^{\MC} (W)} $ is a quasi-isomorphism. As an application, we exhibit the case of the Hochschild complex of a minimal $ A_∞ $-category.

\subsection{Almost pronilpotent MC elements}
We start by fixing a suitable notion of filtration on $ L_∞ $-algebras. The idea is to create a notion of pronilpotent Maurer-Cartan elements such that embracing brackets and $ L_∞ $-morphisms by such elements is well-defined. We recall and adapt the following definition from \cite{Getzler-Linfty}. The definition is suited well for the Hochschild complex but does not suit for instance $ L_∞ $-algebras presented as infinite direct sums instead of direct products. In order to treat such cases, a simple adaption of the definition, for instance by means of a convergence requirement for the individual series, will suffice.

\begin{definition}
A \emph{filtered graded vector space} is a graded vector space with a complete decreasing filtration
\begin{equation*}
V = F^0 V \supseteq F^1 V \supseteq … \supseteq 0.
\end{equation*}
An \emph{almost pronilpotent $ L_∞ $-algebra} is an $ L_∞ $-algebra $ L $ with the structure of a filtered graded vector space $ \{F^k L\}_{k ∈ ℕ} $ such that for some constant $ C ∈ ℤ $ we have the rule
\begin{equation*}
l^m (F^{k_1} L × … × F^{k_m} L) ⊂ F^{k_1 + … + k_m + C - m} L.
\end{equation*}
An \emph{almost pronilpotent Maurer-Cartan element} is a Maurer-Cartan element $ W ∈ \MC(L) $ which lies in $ F^1 L $. An \emph{almost pronilpotent morphism} of $ L_∞ $-algebras is a morphism $ φ: L' → L $ such that for some constant $ D $ we have the rule
\begin{equation*}
φ^m (F^{k_1} L' × … × F^{k_m} L') ⊂ F^{k_1 + … + k_m + D - m} L.
\end{equation*}
\end{definition}

The following is a classical lemma in contexts with more rigid nilpotency assumptions, see for instance \cite[Proposition 4.4]{Sheng-Tang-Zhu}.

\begin{lemma}
Let $ L $ be an almost pronilpotent $ L_∞ $-algebra and $ W $ an almost pronilpotent Maurer-Cartan element. Then the following twisting provides a well-defined $ L_∞ $-structure on $ L $ which we denote by $ L_W $:
\begin{equation*}
l^k (a_1, …, a_k) = \sum_{i = 0}^∞ \frac{1}{i!} l^{k+i} (W, …, W, a_1, …, a_k).
\end{equation*}
\end{lemma}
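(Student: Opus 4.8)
The plan is to verify the three defining axioms of an $L_∞$-algebra for the twisted brackets $l^k_W$, treating convergence and well-definedness as the first order of business before the algebraic identities. The key observation is that the almost pronilpotency conditions are precisely engineered to make each infinite sum converge in the complete filtration. First I would check well-definedness: for fixed inputs $a_1, …, a_k$ of filtration degrees $p_1, …, p_k$, the $i$-th term $l^{k+i}(W, …, W, a_1, …, a_k)$ lands in $F^{i + p_1 + … + p_k + C - (k+i)}L = F^{(p_1 + … + p_k + C - k)}L$ since $W ∈ F^1 L$ contributes $i$ to the filtration index while the shift $-(k+i)$ subtracts it back. Crucially the filtration degree of each term is \emph{independent} of $i$, so the partial sums need not individually increase in filtration; this is where the word ``almost'' matters, and I expect this to be the main subtlety. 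To obtain genuine convergence one must use the completeness of the filtration together with the understanding (inherited from the almost pronilpotency hypothesis, or the convergence requirement mentioned in the remark preceding the definition) that for fixed arguments only finitely many terms are nonzero modulo each $F^N L$, so the series converges in the inverse limit topology.

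Next I would verify graded skew-symmetry of $l^k_W$, which is immediate: each summand $\frac{1}{i!} l^{k+i}(W, …, W, a_1, …, a_k)$ inherits skew-symmetry in the $a_j$ from the skew-symmetry of $l^{k+i}$, and since the $W$-slots are all filled by the same element they contribute no extra signs under permutations of the $a_j$. The factor $\frac{1}{i!}$ accounts exactly for the redundancy of permuting the $i$ identical copies of $W$.

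The heart of the proof is the higher Jacobi identity for $\{l^k_W\}$. The standard and cleanest route is to package the twisting on the level of the codifferential on the symmetric coalgebra: a Maurer-Cartan element $W$ determines an automorphism $e^W$ of the cofree cocommutative coalgebra, and the twisted codifferential is the conjugate $e^{-W} \circ Q \circ e^W$ where $Q$ is the original codifferential encoding $\{l^k\}$. Since conjugation preserves the property of squaring to zero, $Q_W^2 = 0$ follows formally, and unwinding $Q_W$ into its Taylor components recovers exactly the stated formula for $l^k_W$. The only thing to check is that the exponential $e^W$ is well-defined as a coalgebra map, which again reduces to the convergence already established via almost pronilpotency. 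Alternatively, one can verify the Jacobi identity by hand: substitute the series into the higher Jacobi relation for $L_W$, expand, and collect terms by the total number $n$ of $W$-insertions; the coefficient of each $n$ is then a sum of instances of the original higher Jacobi identities for $L$ evaluated on the enlarged argument list $(W^{×n}, a_1, …, a_k)$, which vanish term by term. The combinatorial bookkeeping of the shuffle signs and the $\frac{1}{i!}$ factors is routine but tedious; the Maurer-Cartan equation $dW + \frac12[W,W] = 0$ is what guarantees that the purely $W$-dependent contributions (those landing in the ``constant'' term of the twisted bracket) cancel, ensuring $l^0_W = 0$ and hence that $L_W$ is an honest $L_∞$-algebra rather than a curved one.

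I expect the main obstacle to be the convergence bookkeeping rather than the algebra: once one is licensed to manipulate the infinite sums formally, the Jacobi identity is a consequence of the coalgebra conjugation argument. I would therefore state the convergence lemma carefully at the outset, invoking completeness of $\{F^k L\}$ and the almost pronilpotency estimate, and then present the Jacobi verification via the codifferential-conjugation formalism, relegating the explicit sign computation to a remark.
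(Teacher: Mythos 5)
The paper offers no proof of this lemma at all: it is stated as a classical fact with a pointer to \cite[Proposition 4.4]{Sheng-Tang-Zhu}, so there is nothing internal to compare your argument against. Your proposal is nonetheless a correct and standard route, and your ``by hand'' alternative --- expand the higher Jacobi identity for $ l_W $, collect terms by the total number of $ W $-insertions, match multinomial coefficients against the original Jacobi identities on the enlarged argument list, and let the Maurer--Cartan equation absorb the $ m=0 $ leftovers so that no curvature term survives --- is exactly parallel to the combinatorial proof the paper \emph{does} write out for the companion lemma on twisting morphisms $ φ_W $. The coalgebra-conjugation packaging is a legitimate shortcut for the algebra, though you should note it silently assumes the same convergence you establish at the outset.

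One point deserves more care than you give it. You correctly compute that with $ W ∈ F^1 L $ and the estimate $ l^m(F^{k_1}L × \dots × F^{k_m}L) ⊂ F^{k_1+\dots+k_m+C-m}L $, the term with $ i $ copies of $ W $ lands in $ F^{p_1+\dots+p_k+C-k}L $ \emph{independently} of $ i $; but you then resolve this by appeal to ``the understanding'' that only finitely many terms survive modulo each $ F^N L $, which does not follow from the stated hypotheses. Completeness of the filtration gives convergence of $ \sum_i x_i $ only when $ x_i → 0 $ in the filtration topology, and the displayed estimate does not force that. The gap is genuinely in the paper's definition rather than in your argument: what actually rescues the application is that the relevant Maurer--Cartan element $ μ^{≥3} $ lies in $ F^3 \HC({\cat C}_2) $, so each additional $ W $-insertion gains $ 3 - 1 = 2 $ in filtration degree and the series does converge. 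If you write this up, you should either strengthen the hypothesis to $ W ∈ F^2 L $ (or impose the ``convergence requirement for the individual series'' that the paper's preceding remark alludes to) or state explicitly that convergence is an additional standing assumption; as it stands your first paragraph identifies the right obstacle and then waves it away.
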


Let us now examine the twisting of morphisms of almost pronilpotent $ L_∞ $-algebras.

\begin{lemma}
Let $ φ: L' → L $ be an almost pronilpotent morphism of $ L_∞ $-algebras. If $ W $ is an almost pronilpotent Maurer-Cartan element of $ L' $, then $ φ^{\MC} (W) $ is a well-defined almost pronilpotent Maurer-Cartan element of $ L $. The following assignment provides a well-defined almost pronilpotent $ L_∞ $-morphism $ φ_W: L'_W → L_{φ^{\MC} (W)} $:
\begin{equation*}
φ_W (a_k, …, a_1) = \sum_{i = 0}^∞ \frac{1}{i!} φ^{i+k} (W, …, W, a_k, …, a_1).
\end{equation*}
\end{lemma}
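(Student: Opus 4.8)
The plan is to pass to the bar-construction description of $ L_∞ $-algebras, where twisting becomes conjugation by a translation automorphism, so that all three assertions follow from the intertwining relation between codifferentials together with the single identity $ F(e^W) = e^{φ^{\MC}(W)} $. Write $ Q' $ and $ Q $ for the codifferentials on the completed cofree cocommutative coalgebras $ \hat S(L'[1]) $ and $ \hat S(L[1]) $ that encode the two $ L_∞ $-structures, and let $ F: \hat S(L'[1]) → \hat S(L[1]) $ be the coalgebra morphism whose corestriction components are the $ φ^k $. Since left multiplication by a group-like element is a coalgebra map, $ F $ sends the group-like element $ e^W = \sum_{n ≥ 0} \frac{1}{n!} W^{\otimes n} $ to a group-like element, and its corestriction to $ L[1] $ is by definition $ \sum_{n ≥ 1} \frac{1}{n!} φ^n(W, …, W) = φ^{\MC}(W) $; this is precisely the statement $ F(e^W) = e^{φ^{\MC}(W)} $, which records the first sentence of the lemma once convergence is established.

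First I would dispose of convergence and the filtration claims. Almost pronilpotency of $ φ $ together with $ W ∈ F^1 L' $ gives $ φ^n(W, …, W) ∈ F^{n + D - n} L = F^D L $, and more generally each summand $ \frac{1}{i!} φ^{i+k}(W, …, W, a_k, …, a_1) $ with $ a_j ∈ F^{m_j} L' $ lies in $ F^{m_1 + … + m_k + D - k} L $; completeness of the filtration then guarantees that all the series in question converge, exactly as in the preceding lemma for the twisted structure. The same estimate places $ φ^{\MC}(W) $ in the filtration layer required of a Maurer-Cartan element and shows that the components $ φ_W^k $ satisfy the almost pronilpotency estimate with the same constant $ D $, so both $ φ^{\MC}(W) $ and $ φ_W $ are almost pronilpotent.

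The Maurer-Cartan property is then immediate in the coalgebra language: $ W $ is Maurer-Cartan if and only if $ Q'(e^W) = 0 $, and since $ F Q' = Q F $ we obtain $ Q(e^{φ^{\MC}(W)}) = Q(F(e^W)) = F(Q'(e^W)) = 0 $, which is the Maurer-Cartan equation for $ φ^{\MC}(W) $. For the morphism statement, recall that twisting an $ L_∞ $-structure by a Maurer-Cartan element $ ν $ corresponds to conjugating its codifferential by the translation automorphism $ e_ν: x \mapsto e^ν \cdot x $, so that $ Q'_W = e_{-W} Q' e_W $ and $ Q_{φ^{\MC}(W)} = e_{-φ^{\MC}(W)} Q \, e_{φ^{\MC}(W)} $. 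Setting $ F_W ≔ e_{-φ^{\MC}(W)} \circ F \circ e_W $, the identity $ F(e^W) = e^{φ^{\MC}(W)} $ shows that $ F_W $ preserves the coaugmentation, and a one-line computation using $ e_{-W} e_W = \id $ and $ F Q' = Q F $ yields $ F_W Q'_W = e_{-φ^{\MC}(W)} Q F e_W = Q_{φ^{\MC}(W)} F_W $. Hence $ F_W $ is a coaugmented coalgebra morphism intertwining the twisted codifferentials, i.e.\ an $ L_∞ $-morphism $ L'_W → L_{φ^{\MC}(W)} $. Finally I would read off its corestriction: applying $ e_W $ to $ a_1 \cdots a_k $ produces the terms $ \frac{1}{i!} W^{\otimes i} \cdot a_1 \cdots a_k $, and corestricting $ F $ of these recovers exactly $ \sum_{i ≥ 0} \frac{1}{i!} φ^{i+k}(W, …, W, a_k, …, a_1) $, identifying $ F_W $ with the claimed $ φ_W $.

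The main obstacle is not the formal algebra, which is routine once phrased in coalgebra terms, but the careful justification that every infinite sum — the group-like elements $ e^{\pm W} $ and $ e^{\pm φ^{\MC}(W)} $, the translation automorphisms $ e_{\pm ν} $, and the image of these under $ F $ — converges in the completed filtered coalgebra and that the conjugation identities may be manipulated term by term. This rests entirely on the filtration estimate in the definition of almost pronilpotent morphism, so I would state that estimate once and invoke it uniformly. A purely component-wise verification of the $ L_∞ $-morphism relations between $ L'_W $ and $ L_{φ^{\MC}(W)} $ is of course possible, but it trades this single convergence bookkeeping for an unwieldy re-indexing of shuffle sums, which the deliberately sloppy sign conventions of the paper make unattractive.
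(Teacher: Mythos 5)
Your proof is correct, but it takes a genuinely different route from the paper's. The paper verifies the $L_\infty$-morphism relations for $\varphi_W$ by brute force: it expands both sides, fixes the total number $N$ of $W$-insertions, and matches each summand against the $L_\infty$-morphism relations for $\varphi$ evaluated on the input sequence $W, \dots, W, a_1, \dots, a_k$, checking that the multiplicities $i!\,j!\binom{i+j}{i}$ and $u_1!\cdots u_x!\,i_1!\cdots i_r!\binom{i+j}{u_1,\dots,u_x,v_1,\dots,v_r}$ agree; the unmatched $m=0$ terms are absorbed by the Maurer--Cartan equation for $W$, and signs are explicitly omitted. Your coalgebra argument packages exactly this bookkeeping into the standard facts that $F(e^W)=e^{\varphi^{\MC}(W)}$, that twisting is conjugation by the translation automorphism, and that $F_W Q'_W = Q_{\varphi^{\MC}(W)} F_W$ follows in one line from $FQ'=QF$ --- the multinomial coefficients and signs that the paper tracks by hand are absorbed into the definition of a coalgebra morphism on $\hat S(L'[1])$, and the $m=0$/Maurer--Cartan terms correspond to the $S^0$-component killed by $Q'(e^W)=0$. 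What the paper's approach buys is self-containedness (no bar construction needed); what yours buys is that the signs and combinatorics come for free. One caveat you share with the paper: your estimate places $\varphi^n(W,\dots,W)$ in $F^{n+D-n}L = F^D L$, which is constant in $n$, so completeness of the filtration alone does \emph{not} force the series to converge --- convergence genuinely requires $W \in F^2 L'$ (as holds in the Hochschild application, where $\mu^{\geq 3} \in F^3$) or the ``convergence requirement for the individual series'' the paper alludes to before the definition; since the paper's own proof waves at the same point with ``Since $W$ is almost pronilpotent, the assignment is well-defined,'' I would not count this against you, but you should not attribute the convergence to completeness.
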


\begin{proof}
Since $ W $ is almost pronilpotent, the assignment is well-defined. Moreover, $ φ_W $ is skew-symmetric because $ φ $ is skew-symmetric. The remaining part is to verify the $ L_∞ $-morphism relations. Expand out the left-hand side of the $ L_∞ $-morphism relations as
\begin{align}
\label{eq:spectral-twistedLHS}
& \sum_{i = 0}^∞ \sum_{j = 0}^∞ \sum_{m = 0}^∞ \sum_{σ ∈ S_{m, k-m}} \frac{1}{i! j!} × \\
& \quad φ^{i+(k-m+1)} (W, …, W, {l'}^{j+m} (W, …, W, a_{σ(1)}, …, a_{σ(m)}), a_{σ(m+1)}, …, a_{σ(k)})
\end{align}
and the right-hand side of the $ L_∞ $-morphism relations as
\begin{align}
\label{eq:spectral-twistedRHS}
& \sum_{\substack{1 ≤ r ≤ k \\ i_1 + … + i_r = k}} ~ \sum_{\substack{x, u_1, …, u_x, v_1, …, v_r \\ u_1 + … + u_x + v_1 + … + v_r = i+j}} 
\frac{1}{u_1! … u_x! i_1! … i_r!} × \\
& \quad l\big(φ^{u_1} (W, …, W), …, φ^{u_x} (W, …, W), \\
& \quad ~~ φ^{i_1 + v_1} (W, …, W, a_{t(1)}, …, a_{t(i_1)}), …,
φ^{i_r + v_r} (W, …, W, a_{t(i_1 + … + i_{r-1} + 1)}, …, a_{t(k)})\big).
\end{align}
It is our task to compare these two expressions with the help of the existing $ L_∞ $-morphism relations which hold between $ l $, $ l' $ and $ φ $. Fix a number $ N ≥ 0 $ and regard the input sequence $ W, …, W, a_1, …, a_k $ of length $ N + k $. Evaluate the left-hand side of the $ L_∞ $-morphism relations for $ l, l', φ $. Then for every pair $ (i, j) $ with $ i+j = N $ the left-hand sides includes precisely $ {N \choose i} $ times the following term, because this is the number of possible choices for the first set of $ W $ insertions:
\begin{equation*}
φ^{i+(k-m+1)} (W, …, W, {l'}^{j+m} (W, …, W, a_{σ(1)}, …, a_{σ(m)}), a_{σ(m+1)}, …, a_{σ(k)}).
\end{equation*}
Evaluate now the right-hand side of the $ L_∞ $-morphism relations. This yields precisely $ {N \choose u_1, …, u_x, v_1, …, v_r} $ times the following term, because this is the number of possible choices for the partitioning of $ W $ insertions:
\begin{align*}
& l\big(φ^{u_1} (W, …, W), …, φ^{u_x} (W, …, W), \\
& \quad φ^{i_1 + v_1} (W, …, W, a_{t(1)}, …, a_{t(i_1)}), …, \\
& \quad φ^{i_r + v_r} (W, …, W, a_{t(i_1 + … + i_{r-1} + 1)}, …, a_{t(k)})\big).
\end{align*}
To conclude, the $ L_∞ $-relations for $ l, l', φ $ feature precisely $ i! j! {i+j \choose i} $ times the summand in \eqref{eq:spectral-twistedLHS} and $ u_1! … u_x! i_1! … i_r! {i+j \choose u_1, …, u_x, v_1, …, v_r} $ times the summand in \eqref{eq:spectral-twistedRHS}. Since both numbers agree, we conclude that the left-hand side and right-hand side of the $ L_∞ $-morphism relations for $ l'_W $, $ l_{φ^{\MC}} $ and $ φ_W $ match up exactly. There are non-matched terms in in \eqref{eq:spectral-twistedLHS} arising from the choice $ m = 0 $. These are covered by the Maurer-Cartan equation for $ W $. We omit sign checks and finish the proof.
\end{proof}

\subsection{Two-step procedure for minimal model}
\label{sec:spectral-spectral}
In this section, we explain the basic strategy of the present paper. We start with a DGLA presented as the twisting of a simpler DGLA by an almost pronilpotent Maurer-Cartan element, and provide a two-step procedure to compute its minimal model. For sake of language, use the following terminology:
\begin{definition}
A \emph{spectral DGLA} is a DGLA presented in the form $ L_W $, where $ L $ is an almost pronilpotent DGLA $ L $ and $ W ∈ \MC(L) $ is an almost pronilpotent Maurer-Cartan element.
\end{definition}

In the present paper, we exhibit a strategy to compute minimal models of spectral DGLAs. This strategy consists of the following steps:
\begin{enumerate}
\item Start from a spectral DGLA $ L_W $.
\item Compute the minimal model $ \H L $ together with the projection $ π: L → \H L $.
\item Compute the minimal model $ \H((\H L)_{π^{\MC} (W)}) $ together with the projection
\begin{equation*}
π': (\H L)_{π^{\MC} (W)} → \H((\H L)_{π^{\MC} (W)}).
\end{equation*}
\item Form the \emph{spectral projection}
\begin{equation*}
π' ∘ π_W: L_W \overset{π_W}{→} (\H L)_{π^{\MC} (W)} \overset{π'}{→} \H((\H L)_{π^{\MC} (W)}).
\end{equation*}
\item Investigate whether it is a quasi-isomorphism.
\end{enumerate}

Since we do not have an explicit expression for the projection $ π: L → \H L $ at our disposal, it is useful to consider the composition of $ L_∞ $-inclusion $ i: \H L → L $ as an alternative path. In these cases, the twisting element on $ \H L $ should be some element $ W_H ∈ \MC(\H L) $ such that $ W = i^{\MC} (W_H) $. The strategy can then be formulated as follows:

\begin{enumerate}
\item Start from a spectral DGLA $ L_W $.
\item Compute the minimal model $ \H L $ together with the inclusion $ i: \H L → L $.
\item Find an almost pronilpotent $ W_H ∈ \MC(\H L) $ such that $ W = i^{\MC} (W_H) $.
\item Compute the minimal model $ \H((\H L)_{W_H}) $ together with the inclusion
\begin{equation*}
i': \H((\H L)_{W_H}) → (\H L)_{W_H}.
\end{equation*}
\item Form the \emph{spectral inclusion}
\begin{equation*}
i_{W_H} ∘ i': \H((\H L)_{W_H}) \overset{i'}{→} (\H L)_{W_H} \overset{i_{W_H}}{→} L_W.
\end{equation*}
\item Investigate whether it is a quasi-isomorphism.
\end{enumerate}

\begin{remark}
\label{rem:spectral-pi-i}
In the case of the inclusion procedure we have $ π^{\MC} (W) = W_H $, without further computation. Indeed,
\begin{equation*}
π^{\MC} (W) = (π ∘ i)^{\MC} (W_H) = \id_{\H L}^{\MC} (W_H) = W_H.
\end{equation*}
\end{remark}

\subsection{Hochschild cohomology of $ A_∞ $-categories}
The inclusion procedure applies directly to the Hochschild DGLA of a minimal $ A_∞ $-category. The idea is to split the Hochschild differential as $ d = [μ^2, -] + [μ^{≥3}, -] $ and interpret the cochain $ μ^{≥3} $ as an almost pronilpotent Maurer-Cartan element.

\begin{lemma}
Let $ \cat C $ be a graded associative $ A_∞ $-category and $ L = \HC(\cat C) $. Denote by $ \HC^i (\cat C) ⊂ \HC (\cat C) $ the subspace of cochains of length $ i $. Then $ L $ is almost pronilpotent with the filtration $ F^k L = \prod_{i = k}^∞ \HC^i (\cat C) $. The minimal model $ \HH(\cat C) $ obtained from the Kadeishvili construction is almost pronilpotent with the filtration $ F^k \HH(\cat C) = \prod_{i = k}^∞ \HH^i (\cat C) $. The projection $ π: \HC(\cat C) → \HH(\cat C) $ and the inclusion $ i: \HH(\cat C) → \HC(\cat C) $ are almost pronilpotent as well.
\end{lemma}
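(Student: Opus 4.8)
The plan is to read off, for each operation involved, its \emph{length-degree}---the amount by which it shifts the length grading---and then to deduce the almost-pronilpotency estimates by a bookkeeping of vertices and edges in the Kadeishvili trees.

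I would first treat $L = \HC(\cat C)$. Its filtration $F^k L = \prod_{i \geq k} \HC^i(\cat C)$ is complete and Hausdorff simply because it is a product filtration, so $F^0 L = L$ and $\bigcap_k F^k L = 0$. The Gerstenhaber product inserts one cochain into a slot of another, so a length-$p$ and a length-$q$ cochain yield a length-$(p+q-1)$ cochain; hence $[F^{k_1} L, F^{k_2} L] \subseteq F^{k_1+k_2-1} L$, which is precisely the rule $l^2(F^{k_1} \times F^{k_2}) \subseteq F^{k_1+k_2+C-2} L$ with $C = 1$. Since $\cat C$ is associative, $d = [\mu^2, -]$ with $\mu^2$ of length $2$ raises length by exactly one, so $l^1(F^{k_1} L) = d(F^{k_1} L) \subseteq F^{k_1+1} L \subseteq F^{k_1 + C - 1} L$ with the same $C = 1$. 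Thus $\HC(\cat C)$ is almost pronilpotent with constant $C = 1$.

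For the minimal model I would fix a \emph{length-homogeneous} homological splitting, which is possible because $d$ is homogeneous of length-degree $+1$: the decomposition $\HC = H \oplus \Im(d) \oplus R$ can be taken in each length separately, giving $H = \HH(\cat C) = \prod_i \HH^i(\cat C)$ together with completeness of its length filtration. With this choice $i$ and $\pi$ are of length-degree $0$, while the codifferential $h$ has length-degree $-1$, since it inverts the length-raising differential $d$. Substituting these into the Kadeishvili formulas---where only binary trees survive because $l^{\geq 3} = 0$ for a DGLA---I compute the length-degree of each transferred operation by counting: a binary tree with $m$ leaves has $m-1$ bracket-vertices and $m-2$ internal $h$-edges, so $l^m_{\HH}$ has length-degree $3-2m$, and replacing the root projection by a homotopy shows the inclusion component $i^m$ has length-degree $2-2m$ (while $\pi$ is the linear projection, of length-degree $0$).

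The final and genuinely delicate step is to convert these length-degrees into almost-pronilpotency of $\HH(\cat C)$ and of the maps $i, \pi$. Here the naive reading breaks down: a length-degree of $3-2m$ would force a constant with $C \leq 3 - m$ simultaneously for every arity $m$ at which $l^m_{\HH}$ is nonzero, and as $\HH(\cat C)$ carries classes of unbounded length the higher brackets need not vanish, so no single $C$ can work. The main obstacle is therefore to justify almost-pronilpotency in the convergence-adapted form flagged just after the definition, and the key observation is that the length grading is bounded below by $0$. Concretely, for an almost pronilpotent $W_H \in F^1 \HH$ the term $l^{k+i}_{\HH}(W_H, \dots, W_H, a_1, \dots, a_k)$ has output length at least $\tlength(a_1) + \dots + \tlength(a_k) + 3 - 2k - i$, because each inserted $W_H$ raises the input-length sum by at least $1$ while the length-degree drops by $2$; hence the output length strictly decreases in $i$ and eventually turns negative. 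Since there are no cochains of negative length, every twisting series has only finitely many nonzero terms, so the twistings of $\HH(\cat C)$ by $W_H$, of the inclusion $i$, and of the projection $\pi$ are all well defined. Verifying these finiteness statements---the signs in the Kadeishvili formulas being routine---completes the proof.
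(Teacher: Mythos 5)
Your treatment of $L = \HC(\cat C)$ itself coincides with the paper's: the Gerstenhaber bracket has length-degree $-1$ and the differential length-degree $+1$, giving the constant $C=1$. For the minimal model your route diverges from the paper's in an instructive way. The paper simply asserts that the transferred bracket $l^k$ raises total length by $1-k$ and that $i^k,\pi^k$ raise it by $-k$; your vertex/edge count in the Kadeishvili trees gives $3-2m$ and $2-2m$ instead, and your count is the correct one (the two agree only at $m=2$; already $l^3_{\HH}=\pi[h[-,-],-]$ has length-degree $-3$, not $-2$). You are also right that a length-degree of $3-2m$ is incompatible with a single constant $C$ in the literal definition once brackets of unbounded arity survive on $\HH(\cat C)$, so you have in effect located a soft spot in the paper's own one-line argument, and retreating to the convergence-adapted form of the definition that the paper flags in advance is a reasonable repair.

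However, your repair has a genuine gap. From ``the output length of $l^{k+i}_{\HH}(W_H,\dots,W_H,a_1,\dots,a_k)$ is \emph{at least} $\tlength(a_1)+\dots+\tlength(a_k)+3-2k-i$'' you conclude that the output length eventually turns negative and the terms vanish. A lower bound tending to $-\infty$ forces nothing; it merely becomes vacuous. The bound is attained only by the length-$1$ components of $W_H$: a component of $W_H$ of length $2$ contributes $+2$ to the input length while the length-degree drops by $2$ per insertion, so the output length is unchanged no matter how many such components are inserted, and infinitely many terms of the twisting series can land in one and the same output length. The finiteness you claim therefore fails for a general $W_H\in F^1\HH(\cat C)$, and with it the well-definedness of the twisted structure in this generality. (In the paper's application this is harmless: $\HC(\GtlT\Sigma)$ is a DGLA, so its own twisting series is finite for trivial reasons, and on $\HH(\GtlT\Sigma)$ the higher brackets vanish while $W_H$ is a finite sum of bounded length. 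But neither your argument nor the paper's asserted count of $1-k$ establishes the lemma for an arbitrary graded associative category.) To close the gap you would need either an a priori bound on the arities in which $l^m_{\HH}$ is nonzero, or a finiteness statement for the contributions to each fixed output length that does not rest on the lower bound alone.
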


\begin{proof}
To see that $ L $ is almost pronilpotent, it suffices to note that the Gerstenhaber bracket satisfies $ [ν, η] ∈ \HC^{i+j-1} (\cat C) $ whenever $ ν ∈ \HC(\cat C) $ and $ η ∈ \HC(\cat C) $. To see that $ \HH(\cat C) $ is almost pronilpotent, note that the $ L_∞ $-bracket $ l^k $ increases the total length of the input cochains by $ 1-k $. Similarly, the $ k $-adic components $ π^k $ and $ i^k $ increase the total length of the input cochains by $ -k $. This finishes the proof.
\end{proof}

\begin{lemma}
Let $ \cat C $ be a minimal $ A_∞ $-category and $ {\cat C}_2 $ the underlying associative category. Then $ W = μ^{≥3} $ is an almost pronilpotent Maurer-Cartan element of $ \HC({\cat C}_2) $, and $ \HC(\cat C) $ is spectral:
\begin{equation*}
\HC(\cat C) = \HC({\cat C}_2)_W.
\end{equation*}
\end{lemma}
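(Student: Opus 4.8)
The plan is to recognise that $\HC(\cat C)$ and $\HC(\cat C_2)$ are built on the same graded vector space with the same Gerstenhaber bracket, so that the two DGLAs differ only in their differential, and that twisting by $W = μ^{≥3}$ accounts for exactly this difference. First I would note that $\cat C$ and $\cat C_2$ share objects, hom spaces and $ℤ/2ℤ$-grading, the category $\cat C_2$ arising from $\cat C$ by keeping only $μ^2$. Since both the graded vector space underlying $\HC(−)$ and the Gerstenhaber product $η · ω$ depend only on the hom spaces and their grading — and not on the products $μ$ — the complexes $\HC(\cat C)$ and $\HC(\cat C_2)$ coincide as graded vector spaces and carry the same Gerstenhaber bracket. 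In particular $W = μ^{≥3}$ is a genuine cochain of $\HC(\cat C_2)$, lying in $\prod_{i ≥ 3} \HC^i(\cat C_2) ⊂ F^1 L$ for the filtration of the preceding lemma, and its reduced degree is odd. By that lemma $L = \HC(\cat C_2)$ is almost pronilpotent, so once the Maurer-Cartan equation is checked, $W$ will be an almost pronilpotent Maurer-Cartan element and $L_W$ will be well-defined.

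The decisive step is the Maurer-Cartan equation, and this is where the real content lies. Writing $μ = μ^2 + W$ with reduced degree $\|μ\| = 1$, the $A_∞$-relations for $\cat C$ are equivalent to $μ · μ = 0$, hence to $[μ, μ] = 2\, μ · μ = 0$. Expanding by bilinearity and graded skew-symmetry,
\begin{equation*}
[μ, μ] = [μ^2, μ^2] + 2\,[μ^2, W] + [W, W],
\end{equation*}
where the cross terms combine because $[μ^2, W] = [W, μ^2]$. The associativity of $\cat C_2$ is precisely the relation $μ^2 · μ^2 = 0$, i.e.\ $[μ^2, μ^2] = 0$. Cancelling this term and dividing by two yields $[μ^2, W] + \tfrac{1}{2}[W, W] = 0$, which is exactly the Maurer-Cartan equation $dW + \tfrac{1}{2}[W, W] = 0$ in $\HC(\cat C_2)$, whose differential is $d(−) = [μ^2, −]$.

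Finally I would identify $L_W$ with $\HC(\cat C)$ using the twisting formula of the earlier lemma. Since $L$ is a DGLA, only $l^1 = d$ and $l^2 = [−, −]$ are nonzero, so the defining series for each $l^k_W$ truncates: the twisted bracket $l^2_W$ is the Gerstenhaber bracket unchanged, every higher twisted bracket $l^{k ≥ 3}_W$ vanishes, and the twisted differential is $l^1_W(−) = d(−) + [W, −] = [μ^2, −] + [μ^{≥3}, −] = [μ, −]$. The latter is exactly the Hochschild differential of $\cat C$, so $\HC(\cat C_2)_W$ and $\HC(\cat C)$ agree as DGLAs.

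The hard part is the middle paragraph: one must track the reduced degrees and the Gerstenhaber signs carefully enough to be sure that the single relation $[μ, μ] = 0$ splits cleanly into the associativity of $\cat C_2$ and the Maurer-Cartan equation for $W$, with the cross term supplying the factor two. Once the signs are in hand, the remaining identifications are purely a matter of unwinding the definitions of the Hochschild complex and the twisting formula.
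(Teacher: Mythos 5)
Your proposal is correct and follows essentially the same route as the paper: the Maurer--Cartan equation for $W=μ^{≥3}$ is extracted from the Gerstenhaber square $μ·μ=0$ of the full $A_∞$-structure together with the associativity relation $μ^2·μ^2=0$, almost pronilpotency follows from $W$ being concentrated in lengths $≥3$, and the identification $\HC(\cat C_2)_W=\HC(\cat C)$ is read off by comparing differentials and brackets. Your version merely makes explicit the sign bookkeeping and the role of associativity that the paper's one-line computation leaves implicit.
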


\begin{proof}
The element $ W = μ^{≥3} $ is a Maurer-Cartan element of $ \HC({\cat C}_2) $ because
\begin{equation*}
[μ^2, μ^{≥3}] + \frac{1}{2} [μ^{≥3}, μ^{≥3}] = (μ^2 + μ^{≥3}) · (μ^2 + μ^{≥3}) = 0.
\end{equation*}
As a cochain, $ W $ is concentrated in length $ ≥ 3 $ and therefore almost pronilpotent. Observe that $ \HC({\cat C}_2)_W $ has differential $ [μ^2, -] + [μ^{≥3}, -] $ and bracket $ [-, -] $ and therefore agrees with $ \HC(\cat C) $, as desired.
\end{proof}

\section{Hochschild cohomology of gentle algebras}
\label{sec:gtl}
In this section, we compute the $ L_∞ $-structure on the Hochschild cohomology of $ \Gtl Σ $. The idea is to apply the spectral inclusion procedure of \autoref{sec:spectral-spectral}:
\begin{center}
\begin{tikzpicture}
\path (0, 0) node[align=center] {$ \GtlT Σ $ \\ Associative category};
\path (2, 0) node {\Large $ \rightsquigarrow $};
\path (4, 0) node[align=center] {$ \HH(\GtlT Σ) $ \\ Classical Hochschild};
\path (6, 0) node {\Large $ \rightsquigarrow $};
\path (8, 0) node [align=center] {$ \H(\HH(\GtlT Σ)_{W_H}) $ \\ $ A_∞ $-Hochschild};
\end{tikzpicture}
\end{center}
We start by regarding the associative category $ \GtlT Σ $ and computing part of the $ L_∞ $-inclusion $ i: \HH(\GtlT Σ) → \HC(\GtlT Σ) $. We pick a Maurer-Cartan element $ W_H ∈ \HH(\GtlT Σ) $ and prove that $ i^{\MC} (W_H) = μ^{≥3}_{\Gtl Σ} $. Then we compute the minimal model $ \H((\HH(\GtlT Σ))_{W_H}) $ and part of the $ L_∞ $-inclusion $ i': \H((\HH(\GtlT Σ))_{W_H}) → (\HH(\GtlT Σ))_{W_H} $. The main result \autoref{th:gtl-main} states that $ i_{W_H} $ is a quasi-isomorphism, which implies that $ \HH(\Gtl Σ) $ is formal and we possess a fairly explicit $ L_∞ $-quasi-isomorphism between $ \HC(\Gtl Σ) $ and its cohomology.

\subsection{Homological splitting of $ \HC(\GtlT Σ) $}
\label{sec:splitting1}
In this section, we choose a homological splitting for $ \HC(\GtlT Σ) $. The approach is to pick a few types of cochains to be included in $ R $, so that the computation of the minimal model becomes easier. We define a precise list $ R_{\Rlist} $ of these cochains and then show that this list of cochains can be extended to a full homological splitting for $ \HC(\GtlT Σ) $.

We start by defining the set of cochains $ R_{\Rlist} $. Let $ D $ and $ D' $ be disk sequences, given by interior angles $ α_1, …, α_l $ and $ β_1, …, β_m $. Let $ k, k' ≥ 1 $. We count indices of the two interior angle sequences $ α_1, …, α_l $ and $ β_1, …, β_m $ modulo $ l $ and $ m $, respectively.
\begin{itemize}
\item[A] Let $ i, j, t $ be any indices with $ β_i α_{j+t} ≠ 0 $ and $ γ $ or $ β $ be any empty or non-empty angles such that $ α_j γ ≠ 0 $ or $ β α_{j+1+kl} ≠ 0 $. Then we include the following cochains in $ R_{\Rlist} $:
\begin{align*}
ν(α_{j+kl-1}, …, α_{j+t+1}, β_{i+k'm-1}, …, β_i α_{j+t}, …, α_j γ) &= γ, \\
ν(β α_{j+kl-1}, …, α_{j+t+1}, β_{i+k'm-1}, …, β_i α_{j+t}, …, α_j) &= β.
\end{align*}
In the corner case $ t = 0 $, the angle $ γ $ is attached to $ β_i α_j $ instead of $ α_j $.
\item[B] Let $ i, j $ be any indices and $ β $ and $ γ $ be angles such that $ α_j β β_i γ ≠ 0 $. Then we include the following cochains in $ R_{\Rlist} $:
\begin{align*}
ν(α_{j+kl-1}, …, α_{j+1}, α_j β β_{i+k'm}, …, β_i γ) &= β β_i γ, \\
ν(β β_{i+k'm}, …, β_i γ α_{j+kl-1}, α_{j+kl-2}, …, α_j) &= β β_i γ.
\end{align*}
\item[C] Let $ i, j, t $ be any indices such that $ β_i α_{j+t} ≠ 0 $ or $ α_{j+t} β_{i+k'm-1} ≠ 0 $. Then we include the following cochains in $ R_{\Rlist} $, respectively:
\begin{align*}
ν(β α_{j+kl}, …, β_{i+k'm-1}, …, β_i α_{j+t}, …, α_j γ) &= β α_j γ, \\
ν(β α_{j+kl}, …, α_{j+t} β_{i+k'm-1}, …, β_i, …, α_j γ) &= β α_j γ.
\end{align*}
\item[D] Let $ i, j $ be any indices and $ β, γ $ be angles such that $ β_i α_j γ ≠ 0 $ and $ β α_j ≠ 0 $, or $ β α_{j+kl} β_{i+k'm-1} ≠ 0 $ and $ β α_j γ ≠ 0 $. Then we include the following cochains in $ R_{\Rlist} $, respectively:
\begin{align*}
ν(β α_{j+kl}, …, β_{i+k'm-1}, …, β_i α_j γ) &= β α_j γ, \\
ν(β α_{j+kl} β_{i+k'm-1}, …, β_i, …, α_j γ) &= β α_j γ.
\end{align*}
\item[E] Let $ j $ be an index such that $ α_j $ winds around puncture $ q $. Then we include the following cochain in $ R_{\Rlist} $:
\begin{align*}
ν(α_{j+kl-1}, …, α_{j+1}) &= ℓ_q^k α_j^{-1}.
\end{align*}
\item[F] Let $ j $ be an index such that $ α_j $ winds around puncture $ q $. Then we include the following two cochains in $ R_{\Rlist} $:
\begin{align*}
ν(α_{j+kl-1}, …, α_j γ) &= ℓ_q^{k'} γ, \\
ν(β α_{j+kl}, …, α_{j+1}) &= ℓ_q^{k'} β.
\end{align*}
\item[G] Let $ j $ be any index and $ β, γ $ be angles such that $ β α_{j-1} ≠ 0 $ or $ α_j γ ≠ 0 $. If the disk sequence $ D = (α_1, …, α_l) $ is an elementary polygon, then we require $ β $ and $ γ $ to be non-empty. We include the following two cochains in $ R_{\Rlist} $, respectively:
\begin{align*}
ν(β α_{j+kl-1}, …, α_j) &= β, \\
ν(α_{j+kl-1}, …, α_j γ) &= γ.
\end{align*}
\end{itemize}

\begin{definition}
The space $ R_{\Rpart} ⊂ \HC(\GtlT Σ) $ is defined as the completion space of $ R_{\Rlist} $ in the sense of \autoref{def:prelim-completion}.
\end{definition}

\begin{figure}
\centering
\begin{tikzpicture}
\begin{scope}[shift={(0, 1.5)}, scale=2]
\path[draw] (0, 0) -- ++(2, 0) -- ++(up:1) -- ++(left:2) -- ++(down:1);
\path[draw] (1, 0) -- ++(up:1);
\path[draw, ->, bend right=90, looseness=1.5] (1.35, 0) to (0.65, 0);
\path[draw, <-, bend left=90, looseness=1.5] (1.35, 1) to (0.65, 1);
\path[draw] (-0.1, 0.45) -- (0.1, 0.45);
\path[draw] (-0.1, 0.5) -- (0.1, 0.5);
\path (0.5, 0.5) node {$ D_1 $};
\path (1.5, 0.5) node {$ D_2 $};
\path[draw, ->, bend right=45] (0, 0.7) to node[pos=1, above] {} (0.3, 1);
\path[draw, <-, bend left=45] (0, 0.3) to node[pos=1, below] {} (0.3, 0);
\path (0.85, 0.9) node {$ α_s $};
\path (0.84, 0.1) node {$ α_{s+1} $};
\path (1.15, 0.9) node {$ β_{t+1} $};
\path (1.15, 0.1) node {$ β_t $};
\path (0.1, 0.9) node {$ α_1 $};
\path (0.1, 0.1) node {$ α_l $};
\end{scope}
\begin{scope}[shift={(8, 0)}, scale=2]
\path[draw] (0, 0) -- ++(2, 0) -- ++(up:1) -- ++(left:2) -- ++(down:1);
\path[draw] (1, 0) -- ++(up:1);
\path[draw, ->, bend right=45] (0.65, 1) to (1, 0.7);
\path[draw, ->, bend right=45] (1, 0.7) to (1.35, 1);
\path[draw, ->, bend right=90, looseness=1.5] (1.35, 0) to (0.65, 0);
\path[draw] (-0.1, 0.45) -- (0.1, 0.45);
\path[draw] (-0.1, 0.5) -- (0.1, 0.5);
\path (0.5, 0.5) node {$ D_1 $};
\path (1.5, 0.5) node {$ D_2 $};
\path[draw, ->, bend right=45] (0, 0.7) to node[pos=1, above] {\tiny } (0.35, 1);
\path[draw, <-, bend left=45] (0, 0.3) to node[pos=1, below] {\tiny } (0.3, 0);
\path (0.85, 0.9) node {$ α_s $};
\path (0.84, 0.1) node {$ α_{s+1} $};
\path (1.15, 0.9) node {$ β_{t+1} $};
\path (1.15, 0.1) node {$ β_t $};
\path (0.1, 0.9) node {$ α_1 $};
\path (0.1, 0.1) node {$ α_l $};

\end{scope}
\begin{scope}[shift={(8, 3)}, scale=2]
\path[draw] (0, 0) -- ++(2, 0) -- ++(up:1) -- ++(left:2) -- ++(down:1);
\path[draw] (1, 0) -- ++(up:1);
\path[draw, ->, bend right=45] (1.35, 0) to (1, 0.3);
\path[draw, ->, bend right=45] (1, 0.3) to (0.65, 0);
\path[draw, ->, bend right=90, looseness=1.5] (0.65, 1) to (1.35, 1);
\path[draw] (-0.1, 0.45) -- (0.1, 0.45);
\path[draw] (-0.1, 0.5) -- (0.1, 0.5);
\path (0.5, 0.5) node {$ D_1 $};
\path (1.5, 0.5) node {$ D_2 $};
\path[draw, ->, bend right=45] (0, 0.7) to node[pos=1, above] {\tiny } (0.35, 1);
\path[draw, <-, bend left=45] (0, 0.3) to node[pos=1, below] {\tiny } (0.3, 0);
\path (0.85, 0.9) node {$ α_s $};
\path (0.84, 0.1) node {$ α_{s+1} $};
\path (1.15, 0.9) node {$ β_{t+1} $};
\path (1.15, 0.1) node {$ β_t $};
\path (0.1, 0.9) node {$ α_1 $};
\path (0.1, 0.1) node {$ α_l $};

\end{scope}
\begin{scope}[shift={(-1.5, 0)}, scale=1.666666]
\path[draw, decorate, decoration={brace, amplitude=4pt}] (8.2, 3) to node[midway, shift={(0.1, 0)}, right] {$ ∈ R $} (8.2, 1.8);
\path[draw, decorate, decoration={brace, amplitude=4pt}] (9, 3) to node[midway, shift={(0.1, 0)}, right] {$ ∈ \Im(d) $} (9, 0);
\path (6.9, 1.5) node {\large $ + $};
\path[draw, <-] (3.5, 1.4) to node[midway, below] {$ h = $ “heal”} (5.5, 1.4);
\path[draw, ->] (3.5, 1.6) to node[midway, above] {$ d = $ “divide”} (5.5, 1.6);
\end{scope}
\end{tikzpicture}
\caption{This figure illustrates how the differential $ d $ and $ h $ act on certain types of cochains. The cochain $ ν $ illustrated on the left-hand side takes nonzero value on the illustrated non-elementary disk sequence $ α_1, …, β_{t+1} α_s, …, α_{s+1} β_t, …, α_l $. In consequence, its differential $ dν $ takes nonzero value on all possible ways to divide the disk sequence. As a useful mnemonic, we say that $ d $ “divides” the angle sequence and $ h $ “heals” the angle sequence.}
\label{fig:splitting1-dh}
\end{figure}
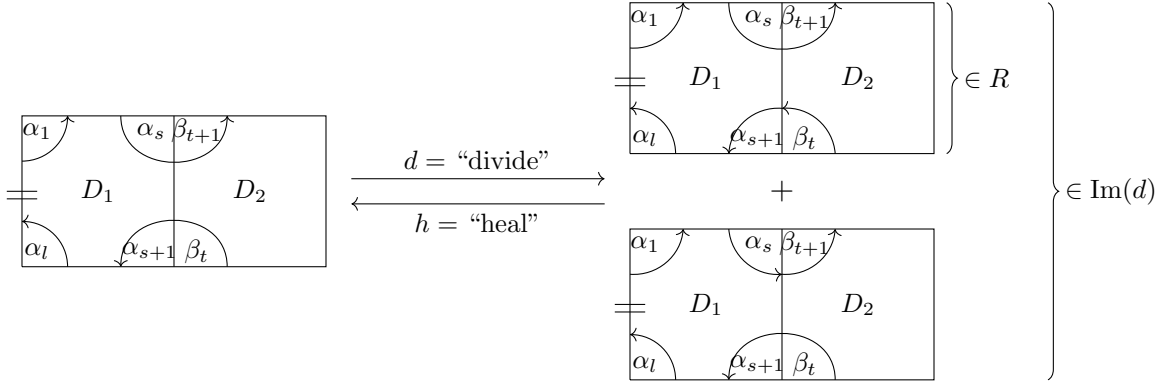

\begin{lemma}
\label{th:splitting1-RKer}
We have $ R_{\Rpart} ∩ \Ker(d) = 0 $.
\end{lemma}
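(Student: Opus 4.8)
The plan is to exploit that the Hochschild differential $d = [\mu^2, -]$ is homogeneous of length-degree $+1$: if $\nu \in \HC(\GtlT \Sigma)$ is concentrated in length $n$, then $d\nu$ is concentrated in length $n+1$. Writing a general element of $R_{\Rpart}$ as $\nu = \sum_n \nu_n$ with $\nu_n \in \vspan(R_{\Rlist,n})$, and using that the target $\HC(\GtlT \Sigma)$ is a product over lengths, the equation $d\nu = 0$ decouples into the separate conditions $d\nu_n = 0$ for every $n$, because the length-$(n+1)$ slice of $d\nu$ equals precisely $d\nu_n$. Since on a fixed surface only finitely many of the generators A--G have a given input length (the powers $k, k'$ are bounded once the length is fixed, the disk sequences range over a finite set, and the indices and angles $\beta,\gamma$ are finite in number), each $\vspan(R_{\Rlist,n})$ is finite-dimensional. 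It therefore suffices to prove that $d$ is injective on each such finite-dimensional space.

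To establish this injectivity I would rely on the ``divide'' description of $d$ recalled in \autoref{fig:splitting1-dh}. For a cochain $\nu$ taking a nonzero value on a long angle sequence $a_n, \ldots, a_1$, its differential on a sequence of length $n+1$ is the signed sum
\[
d\nu(a_{n+1}, \ldots, a_1) = \mu^2(a_{n+1}, \nu(a_n, \ldots, a_1)) \pm \mu^2(\nu(a_{n+1}, \ldots, a_2), a_1) \pm \sum_i \nu(a_{n+1}, \ldots, \mu^2(a_{i+1}, a_i), \ldots, a_1).
\]
Thus $d\nu$ is supported exactly on the sequences obtained by dividing one angle in the support of $\nu$ into two concatenable angles, or by attaching an extra angle at the output. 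The design of the families A--G is precisely that each is nonzero on a non-elementary disk sequence, so that at least one division term always survives.

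The heart of the argument is then a marker-and-cancellation scheme. For each generator $\nu$ of type A--G and length $n$ I would single out a distinguished \emph{marker} sequence $\mathbf{s}_\nu$ of length $n+1$ on which $d\nu(\mathbf{s}_\nu) \neq 0$, and which among all length-$n$ generators is hit by as few others as possible. The natural candidate for $\mathbf{s}_\nu$ is the division that splits the output-adjacent indecomposable angle, since the resulting sequence records both disk sequences $D, D'$ together with the seam position between them, data that nearly pins down the generator. If one can arrange $d\nu'(\mathbf{s}_\nu) = 0$ for all other length-$n$ generators $\nu'$, then evaluating any closed combination $\sum c_\nu \nu$ on $\mathbf{s}_\nu$ gives $c_\nu\, d\nu(\mathbf{s}_\nu) = 0$, hence $c_\nu = 0$, and injectivity follows at once.

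The main obstacle is controlling the overlaps between the supports of different generators after division: the families A--G are defined on overlapping collections of angle sequences, so dividing one generator may land on a sequence also reached by another, opening the door to cancellation inside $d(\sum c_\nu \nu)$. I expect to resolve this not by the naive marker property alone but by equipping the set of divided sequences with a total order --- for instance lexicographically by the block length $kl$, then by the seam index, then by decomposability of the merged angle --- and showing that the order-maximal divided sequence occurring in $d(\sum c_\nu \nu)$ is reached by a single generator, which can be peeled off and the argument iterated. Special attention will be needed for the corner cases written into the definitions: the $t = 0$ case of type A where $\gamma$ attaches to $\beta_i \alpha_j$, the elementary-polygon restriction in type G forcing $\beta, \gamma$ non-empty, and the empty-angle alternatives for $\beta, \gamma$ throughout, each of which must be checked separately so that the chosen marker term genuinely survives.
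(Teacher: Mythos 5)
Your overall strategy coincides with the paper's: reduce to a fixed cochain length using that $d$ raises length by one, then for each listed generator evaluate $d\nu$ on a suitably ``divided'' angle sequence and argue that all other contributions vanish, so that $d\nu = 0$ forces the value of $\nu$ on the listed sequence to vanish. Two problems remain. A minor one first: $\vspan(R_{\Rlist,n})$ is \emph{not} finite-dimensional, because the auxiliary angles $\beta, \gamma$ in types A--G range over the infinitely many angles of $\GtlT Σ$ (its hom spaces are infinite-dimensional) without changing the number of inputs of the cochain. This does not sink the reduction --- any element of the span is still a finite linear combination, and injectivity of $d$ on the span is all you need --- but the finiteness claim you lean on is false as stated.

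The substantive gap is that the proof is deferred exactly where the work lies. The entire content of the paper's argument is the case-by-case verification, for each of the types A--G, that the ``other'' terms of $d\nu$ on the chosen divided sequence vanish because the corresponding re-merged sequences are not of any listed type; your proposal says ``if one can arrange'' this, offers a total order as a fallback, and carries out neither. Worse, the uniform marker you propose --- divide at the seam between the two disk sequences $D$ and $D'$ --- only makes sense for types A--D. Types E and F have no seam: there the values of $\nu$ are power series in full turns $\ell_q$, and the cancellation argument is of a different nature (the two surviving terms of $(d\nu)(α_j, α_{j+kl-1}, \ldots, α_{j+1})$ are linear combinations of full turns based at the two \emph{opposite} ends of the arc $h(α_j) = t(α_{j+1})$, hence cannot cancel each other), while types F and G additionally require splitting into the elementary versus non-elementary cases and, in the latter, rewriting the repeated disk sequence as a union of two smaller disk sequences before applying $d$ again. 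You flag some corner cases (the $t=0$ case of type A, the non-empty restriction in type G) but none of these, so what you have is a correct plan for roughly half of the generators and an unexecuted one for the rest.
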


\begin{proof}
Let $ ν ∈ R_{\Rpart} ∩ \Ker(d) $. We can assume that $ ν $ is a cochain of a specific length. In case $ ν $ is of length $ 0 $ or $ 1 $, it is an elementary check that $ ν = 0 $. Let us therefore assume that the length of $ ν $ is at least 2. It is our task to show that $ ν $ vanishes on all sequences of angles provided in the list $ R_{\Rlist} $. We proceed step by step, checking for every sequence of types A till H that $ ν $ vanishes on that sequence. We use the notation of $ R_{\Rlist} $, in particular the notation of indices $ i, j, t $ and angles $ β, γ $ and puncture $ q $. We shorten all signs to $ ± $ in order to enhance legibility.

\begin{itemize}
\item[A] The functionality of the differential is illustrated in \autoref{fig:splitting1-dh}. We have that
\begin{align*}
& (dν)(α_{j+kl-1}, …, α_{j+t+1}, β_{i+k'm-1}, …, β_i, α_{j+t}, …, α_j γ) \\
& \quad =
± ν(α_{j+kl-1}, …, α_{j+t+1}, β_{i+k'm-1}, …, β_i α_{j+t}, …, α_j γ) \\
& \qquad ±
ν(α_{j+kl-1}, …, α_{j+t+1} β_{i+k'm-1}, …, β_i, α_{j+t}, …, α_j γ), \\
& (dν)(β α_{j+kl-1}, …, α_{j+t+1}, β_{i+k'm-1}, …, β_i, α_{j+t}, …, α_j) \\
& \quad = ± ν(β α_{j+kl-1}, …, α_{j+t+1}, β_{i+k'm-1}, …, β_i α_{j+t}, …, α_j) \\
& \qquad ± ν(β α_{j+kl-1}, …, α_{j+t+1} β_{i+k'm-1}, …, β_i, α_{j+t}, …, α_j).
\end{align*}
It is our task to show that the first summands in both rows vanish. Since $ dν = 0 $, it suffices ot show that the second summands in both rows vanish. This is indeed the case, because the arc $ t(α_{j+t+1}) = h(β_{i+k'm-1}) $ uniquely cuts the union $ D ∪ D' $ into two disk sequences so that there is no way to interpret the angle sequence in the second summands as one of the types A till H.
\item[B] We have that
\begin{align*}
(dν)(α_{j+kl-1}, …, α_{j+1}, α_j, β β_{i+k'm}, …, β_i γ) &= ± ν(α_{j+kl-1}, …, α_{j+1}, α_j β β_{i+k'm}, …, β_i γ), \\
(dν)(β β_{i+k'm}, …, β_i, γ α_{j+kl-1}, α_{j+kl-2}, …, α_j) &= ± ν(β β_{i+k'm}, …, β_i γ α_{j+kl-1}, α_{j+kl-2}, …, α_j).
\end{align*}
We conclude that both right-hand sides vanish.
\item[C] We have that
\begin{align*}
& (dν)(β α_{j+kl}, …, β_{i+k'm-1}, …, β_i, α_{j+t}, …, α_j γ) \\
& \quad = ± ν(β α_{j+kl}, …, β_{i+k'm-1}, …, β_i α_{j+t}, …, α_j γ) \\
& \qquad ± ν(β α_{j+kl}, …, α_{j+t+1} β_{i+k'm-1}, …, β_i, α_{j+t}, …, α_j γ), \\
& (dν)(β α_{j+kl}, …, α_{j+t}, β_{i+k'm-1}, …, β_i, …, α_j γ) \\
& \quad = ± ν(β α_{j+kl}, …, α_{j+t} β_{i+k'm-1}, …, β_i, …, α_j γ) \\
& \qquad ± ν(β α_{j+kl}, …, α_{j+t}, β_{i+k'm-1}, …, β_i α_{j+t-1}, …, α_j γ).
\end{align*}
It is our task to show that the first summands in both rows vanish. Since $ dν = 0 $, it suffices ot show that the second summands in both rows vanish. This is indeed the case, for a reason similar to case A.
\item[D] We have that
\begin{align*}
& (dν)(β α_{j+kl}, …, β_{i+k'm-1}, …, β_i, α_j γ) \\
& \quad = ± ν(β α_{j+kl}, …, β_{i+k'm-1}, …, β_i α_j γ) \\
& \qquad ± ν(β α_{j+kl}, …, α_{j+1} β_{i+k'm-1}, …, β_i, α_j γ), \\
& (dν)(β α_{j+kl}, β_{i+k'm-1}, …, β_i, …, α_j γ) \\ 
& \quad = ± ν(β α_{j+kl} β_{i+k'm-1}, …, β_i, …, α_j γ) \\
& \qquad ± ν(β α_{j+kl}, β_{i+k'm-1}, …, β_i α_{j+kl-1}, …, α_j γ).
\end{align*}
In a fashion similar to the earlier cases, we conclude that the first summands on both rows vanish.
\item[E] We have that
\begin{align*}
(dν)(α_j, α_{j+kl-1}, …, α_{j+1}) &= ± α_j ν(α_{j+kl-1}, …, α_{j+1}) ± ν(α_j, α_{j+kl-1}, …, α_{j+2}) α_{j+1}.
\end{align*}
Let $ q $ and $ p $ denote the punctures around which $ α_j $ and $ α_{j+1} $ wind. Because $ ν ∈ R_{\Rpart} $, we can write $ ν(α_{j+kl-1}, …, α_{j+1}) = α_j^{-1} P(ℓ_q) $ and $ ν(α_j, α_{j+kl-1}, …, α_{j+2}) = α_{j+1}^{-1} Q(ℓ_p) $ where $ P $ and $ Q $ are univariate power series without constant term. The first summand then evaluates to $ P(ℓ_q) $, viewed as a linear combination of full turns starting at the arc end on which $ α_j $ ends. The second summand evaluates to $ Q(ℓ_p) $, viewed as a linear combination of full turns starting at the arc end at which $ α_{j+1} $ starts. Since these two arc ends are the opposite ends of the arc $ h(α_j) = t(α_{j+1}) $ and $ dν = 0 $, we conclude that both summands vanish.
\item[F] We have
\begin{align*}
(dν)(α_{j+kl-1}, …, α_j, γ) &= ± ν(α_{j+kl-1}, …, α_j γ) ± ν(α_{j+kl-1}, …, α_j) γ, \\
(dν)(β, α_{j+kl}, …, α_{j+1}) &= ± ν(β α_{j+kl}, …, α_{j+1}) ± β ν(α_{j+kl}, …, α_{j+1}).
\end{align*}
It is our task to show that $ ν(α_{j+kl-1}, …, α_j γ) $ does not contain a scalar multiple of $ γ $ and $ ν(β α_{j+kl}, …, α_{j+1}) $ does not contain a scalar multiple of $ β $. Assume first that the disk sequence $ D = (α_1, …, α_l) $ is an elementary polygon. Then by virtue of our choice to include only those cochains into type F with non-empty $ β $ and $ γ $, we have that $ ν(α_{j+kl-1}, …, α_j) $ and $ ν(α_{j+kl}, …, α_{j+1}) $ do not contain identities, finishing the argument. Assume now that the disk sequence $ D $ is not an elementary polygon. Then the angle sequence $ α_j, …, α_{j+kl-1} $ can be written in the form $ γ_1, …, δ_1 γ_s, …, γ_{s+1} δ_r, …, γ_t $ for some disk sequences $ γ_1, …, γ_t $ and $ δ_1, …, δ_r $. We have
\begin{equation*}
(dν)(γ_t, …, γ_{s+1} δ_r, …, δ_1, γ_s, …, γ_1) = ± ν(γ_t, …, γ_{s+1} δ_r, …, δ_1 γ_s, …, γ_1).
\end{equation*}
We conclude that $ ν(α_{j+kl-1}, …, α_j) $ vanishes. Analogous reasoning shows that $ ν(α_{j+kl}, …, α_{j+1}) $ vanishes. This finishes the argument.
\item[G] This case is similar to case F.
\end{itemize}
This finishes the proof.
\end{proof}

We are now ready to construct a homological splitting $ H ⊕ \Im(d) ⊕ R $ for the DGLA $ \HC(\GtlT Σ) $. Indeed, by virtue of \autoref{th:splitting1-RKer} there exists a choice of $ R $ such that $ R_{\Rpart} ⊂ R $.

\begin{definition}
\label{def:splitting1-def}
The cohomology space $ H = H^{\even} ⊕ H^{\odd} $ is defined by the following completions:
\begin{align*}
H^{\even} &= \overline{\{ν_λ\}_{λ ∈ \sporadicG}, \{ν_B\}_{B ∈ Σ_2}, \{ν_{q, k, \even}\}_{q ∈ M, k ≥ 1}, \{ν_{B, k, \even}\}_{B ∈ Σ_2, k ≥ 1}}, \\
H^{\odd} &= \overline{ν_1, \{ν_{q, k, \odd}\}_{q ∈ M, k ≥ 1}, \{ν_{B, k, \odd}\}_{B ∈ Σ_2}}.
\end{align*}
The complement space $ R $ is defined by any graded vector space complement of $ H ⊕ \Im(d) $ in $ \HC(\GtlT Σ) $ such that $ R_{\Rpart} ⊂ R $.
\end{definition}

\subsection{Higher structure on $ \HH(\GtlT Σ) $}
\label{sec:higher1}
In this section, we compute the $ L_∞ $-structure on $ \HH(\GtlT Σ) $. The strategy is to evaluate the brackets and higher brackets by means of trees with respect to the homological splitting from \autoref{def:splitting1-def}. We shall compute most brackets explicitly and omit a few less relevant brackets.

We start by examining the brackets of cochains $ ν_{B, k, \odd} $ associated with elementary polygons. It is conventient to perform this calculation more generally for brackets of auxiliary cochains associated with any disk sequences. We define auxiliary cochains as follows:

\begin{definition}
Let $ D $ be a disk sequence given by angles $ α_1, …, α_l $. Then denote by $ ν_D $ the cochain given by $ ν(β α_l, …, α_1) = β $ and $ ν(α_l, …, α_1 γ) = (-1)^{|γ|} γ $, for empty or non-empty angles $ β $ and $ γ $.
\end{definition}

Note that by definition we have that $ ν_{B, 1, \odd} = ν_B $ if $ B $ is an elementary polygon. Moreover, note that $ ν_D ∈ R $ if $ D $ is not an elementary polygon, in contrast to $ ν_{B, 1, \odd} ∈ H $.

Let us now examine the brackets of auxiliary cochains. Let $ D_1 $ and $ D_2 $ be disk sequences given by angles $ α_1, …, α_l $ and $ β_1, …, β_m $, respectively. Assume that $ D_1 $ and $ D_2 $ connect togother to a larger disk sequence $ D_1 ∪ D_2 $ along the angle pairs $ (α_s, β_{t+1}) $ and $ (α_{s+1}, β_t) $, as depicted in \autoref{fig:splitting1-dh}. The indices $ s, t $ are counted modulo $ l $ and $ m $, respectively. We denote by $ C(D_1, D_2) $ the set of all triples $ (s, t, D) $, consisting of connecting indices $ s, t $ together with the connected disk sequence $ D $. Simply speaking, \autoref{th:higher1-auxbracket} says that applying a bracket followed by the codifferential joins together two disk sequences. Conversely, applying the differential cuts larger disk sequences apart into brackets of shorter disk sequences. This phenomenon is illustrated in \autoref{fig:splitting1-dh}.

\begin{lemma}
\label{th:higher1-auxbracket}
Let $ D_1 $ and $ D_2 $ be disk sequences, then
\begin{equation*}
[ν_{D_1}, ν_{D_2}] = - \sum_{D ∈ C(D_1, D_2)} d(ν_D).
\end{equation*}
\end{lemma}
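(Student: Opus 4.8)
The plan is to verify the identity pointwise, by evaluating both sides on an arbitrary compatible sequence of angles $(a_n, \ldots, a_1)$ and checking that the values agree. Expanding the left-hand side via the definition of the Gerstenhaber bracket, $[\nu_{D_1}, \nu_{D_2}] = \nu_{D_1} \cdot \nu_{D_2} - (-1)^{\|\nu_{D_2}\| \|\nu_{D_1}\|} \nu_{D_2} \cdot \nu_{D_1}$, so I would analyze the two Gerstenhaber products separately. On the right-hand side, since $\mu_{\GtlT \Sigma} = \mu^2$, the differential is $d\nu_D = [\mu^2, \nu_D] = \mu^2 \cdot \nu_D - (-1)^{\|\nu_D\|} \nu_D \cdot \mu^2$, whose action is precisely the ``dividing'' of the disk sequence $D$ illustrated in \autoref{fig:splitting1-dh}.

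The core of the argument is the analysis of the left-hand side. First I would determine when $\nu_{D_1} \cdot \nu_{D_2}$ is nonzero on $(a_n, \ldots, a_1)$: a contiguous block of the inputs must be fed into $\nu_{D_2}$ so as to match one of its two defining patterns $(\beta \beta_m, \ldots, \beta_1)$ or $(\beta_m, \ldots, \beta_1 \gamma)$, producing a single angle, and after substituting this angle back the full sequence must match a defining pattern of $\nu_{D_1}$. Geometrically the output angle of $\nu_{D_2}$ is exactly the concatenating angle at a connecting pair, so each such nonzero evaluation records a way of gluing $D_2$ onto $D_1$ along the pairs $(\alpha_s, \beta_{t+1})$ and $(\alpha_{s+1}, \beta_t)$, that is, a triple $(s, t, D) \in C(D_1, D_2)$, together with a choice of which of the two seams of the connected disk $D$ is split open. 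I expect that the product $\nu_{D_1} \cdot \nu_{D_2}$ produces the divisions at one seam (keeping the $D_1$-pattern intact while collapsing the $D_2$-angles) and $\nu_{D_2} \cdot \nu_{D_1}$ those at the complementary seam; in each case the output is the boundary angle $\beta$ or $\gamma$, matching the value taken by $\nu_D$ on the corresponding undivided sequence.

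To finish, I would compute $d\nu_D$ explicitly in the style of the cocycle lemma and confirm that it is supported exactly on the two families of divided sequences of $D$ — one family for each seam — with the opposite of the sign appearing on the left. The crux is then to establish that this assignment is a bijection: every nonzero evaluation of $[\nu_{D_1}, \nu_{D_2}]$ corresponds to exactly one pair (connected disk $D$, seam), and conversely. Here one must invoke the unique-cutting property of disk sequences already used in case~A of \autoref{th:splitting1-RKer}, namely that the arc at the split determines the decomposition of $D_1 \cup D_2$ into the two pieces unambiguously, so that no divided sequence is counted twice and none is missed. I expect the main obstacle to be precisely this bookkeeping together with the attendant sign verification: one must carefully treat the corner cases (the $t = 0$ attachment noted in the definition of the type~A cochains, and the distinction between empty and non-empty boundary angles $\beta, \gamma$), and confirm that the accumulated Koszul and reduced-degree signs combine to yield exactly the global factor $-1$ on the right-hand side.
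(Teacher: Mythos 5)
Your overall strategy coincides with the paper's: evaluate $[\nu_{D_1},\nu_{D_2}]$ and each $d(\nu_D)$ pointwise, classify the angle sequences supporting each Gerstenhaber product, match them against the ``divided'' sequences of the connected disks as in \autoref{fig:splitting1-dh}, and check that the accumulated signs produce the global $-1$. The paper's proof does exactly this, listing the supporting sequences of $\nu_{D_1}\cdot\nu_{D_2}$ for each connecting triple $(s,t,D)$, computing their values, and comparing with the values of $d\nu_D$ on the same sequences.

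There is, however, one concrete misstep in your proposed bookkeeping that would make the matching fail as you have set it up: you expect $\nu_{D_1}\cdot\nu_{D_2}$ to produce the divisions at one seam and $\nu_{D_2}\cdot\nu_{D_1}$ those at the complementary seam. This contradicts your own (correct) earlier observation that each nonzero evaluation of $\nu_{D_1}\cdot\nu_{D_2}$ records a triple $(s,t,D)$ \emph{together with} a choice of seam. In fact $\nu_{D_1}\cdot\nu_{D_2}$ alone already accounts for divisions at \emph{both} seams: the inner block fed to $\nu_{D_2}$ can match either of the two defining patterns of $\nu_{D_2}$, producing either $\alpha_s$ (from the block $(\beta_{t+m},\ldots,\beta_{t+1}\alpha_s)$, which keeps the seam $(\alpha_s,\beta_{t+1})$ joined and splits $(\alpha_{s+1},\beta_{t+m})$) or $\alpha_{s+1}$ (from $(\alpha_{s+1}\beta_{t+m},\ldots,\beta_{t+1})$, which does the opposite), and each variant occurs with a $\beta$- or a $\gamma$-decoration, giving four families of sequences per triple. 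The correct dichotomy between the two Gerstenhaber products is not ``which seam is split'' but ``which constituent disk forms the contiguous inner block of the linear Hochschild input'': $\nu_{D_2}\cdot\nu_{D_1}$ accounts for the divided sequences of the same connected disks $D$ read starting from (and decorated at) an angle of $D_2$, so that the $D_1$-angles form the inner block. If you pair each product with a single seam, each product turns out to be supported on twice as many sequences as your bijection allows, and the count does not close. Once this is corrected, the remainder of your plan --- the unique-cutting argument to rule out double counting, and the sign verification --- goes through exactly as in the paper.
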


\begin{proof}
Let $ D_1 $ and $ D_2 $ be given by angles $ α_1, …, α_l $ and $ β_1, …, β_m $, respectively. It is our task to analyze the angle sequences on which the Gerstenhaber products $ ν_{D_1} · ν_{D_2} $ and $ ν_{D_2} · ν_{D_1} $ take nonzero value. Indeed, the product $ ν_{D_1} · ν_{D_2} $ only takes nonzero values on the following sequences, for every $ (s, t, D) ∈ C(D_1, D_2) $:
\begin{align*}
& α_{i+l-1}, …, α_{s+1}, β_{t+m}, …, β_{t+1} α_s, …, α_i γ, \\
& β α_{i+l-1}, …, α_{s+1}, β_{t+m}, …, β_{t+1} α_s, …, α_i, \\
& α_{i+l-1}, …, α_{s+2}, α_{s+1} β_{t+m}, …, β_{t+1}, α_s, …, α_i γ, \\
& β α_{i+l-1}, …, α_{s+2}, α_{s+1} β_{t+m}, …, β_{t+1}, α_s, …, α_i.
\end{align*}
With the notation $ u = ‖α_i‖ + … + ‖α_{s-1}‖ + |α_s| $, the precise values are:
\begin{align*}
(-1)^{‖α_i γ‖ + … + ‖α_{s-1}‖ + |α_s| + |γ|} γ &= (-1)^u γ, \\
(-1)^{‖α_i‖ + … + ‖α_{s-1}‖ + |α_s|} β &= (-1)^u β, \\
(-1)^{‖α_i γ‖ + … + ‖α_s‖ + |γ|} γ &= (-1)^{u+1} γ, \\
(-1)^{‖α_i‖ + … + ‖α_s‖} β &= (-1)^{u+1} β.
\end{align*}
The precise values arising from $ dν_D $ are:
\begin{align*}
(-1)^{1+1+‖α_i γ‖ + … + ‖α_{s-1}‖ + ‖β_{t+1} α_s‖ + … + ‖β_{t+m-1}‖ + |β_{t+m}| + |γ|} γ &= (-1)^{u+1} γ, \\
(-1)^{1+1+‖α_i‖+…+‖α_{s-1}‖+‖β_{t+1} α_s‖ + … + ‖β_{t+m-1}‖ + |β_{t+m}|} β &= (-1)^{u+1} β, \\
(-1)^{1+1+‖α_i γ‖ + … + ‖α_{s-1}‖ + |α_s| + |γ|} γ &= (-1)^u γ, \\
(-1)^{1+1+‖α_i‖ + … + ‖α_{s-1}‖ + |α_s|} β &= (-1)^u β.
\end{align*}
In the first two rows, we have used that $ ‖β_{t+1}‖ + … + ‖β_{t+m}‖ = 0 $ since $ D_2 $ is a disk sequence. The Gerstenhaber product $ ν_{D_2} · ν_{D_1} $ and the element $ dν_D $ take similar values with $ D_1 $ and $ D_2 $ swapped. We conclude that the sum of the cochains $ - d(ν_D) $ takes the exact same values as $ [ν_{D_1}, ν_{D_2}] $, which finishes the proof.
\end{proof}

\begin{lemma}
\label{th:higher1-brackets}
Let $ B, B' $ be elementary polygons, $ q, q' $ be punctures, $ k, k' ≥ 1 $ and $ ν_κ, ν_λ $ be any generic sporadic or polygon sporadic classes. Then in $ \HH(\GtlT Σ) $, we have the following brackets of cohomology basis elements.
\begin{align*}
[ν_{q, k, \odd}, ν_{q', k', \odd}] &= 0, \\
[ν_{q, k, \even}, ν_{q', k', \odd}] &= δ_{q,q'} ν_{q, k+k', \odd}, \\
[ν_{q, k, \even}, ν_{q', k', \even}] &= δ_{q,q'} ν_{q, k+k', \even}, \\
[ν_λ, ν_{q, k, \odd}] &= k λ_{ℓ_q} ν_{q, k, \odd}, \\
[ν_λ, ν_{q, k, \even}] &= k λ_{ℓ_q} ν_{q, k, \even}, \\
[ν_κ, ν_λ] &= 0, \\
[ν_{B, k, \odd}, ν_{B', k', \odd}] &= 0, \\
[ν_{B, k, \odd}, ν_{B', k', \even}] &= δ_{B, B'} ν_{B, k+k', \odd}, \\
[ν_{B, k, \odd}, ν_{q, k', \odd}] &= 0, \\
[ν_{B, k, \odd}, ν_{q, k', \even}] &= 0, \\
[ν_{B, k, \odd}, ν_λ] &= k \sum_{i = 1}^l λ(β_i) ν_{B, k, \odd}.
\end{align*}
All higher brackets vanish. Moreover, if $ a = \sum_{i = 0}^∞ a_i $ and $ b = \sum_{j = 0}^∞ b_j $ are two converging series in $ \HH(\GtlT Σ) $, then
\begin{equation*}
[a, b] = \sum_{N = 0}^∞ \sum_{i+j = N} [a_i, b_j].
\end{equation*}
\end{lemma}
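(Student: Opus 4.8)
The plan is to reduce every structure map on $\HH(\GtlT\Sigma)$ to the Gerstenhaber bracket on $\HC(\GtlT\Sigma)$ via the Kadeishvili formula. Since $L=\HC(\GtlT\Sigma)$ is a genuine DGLA, its only nonzero structure map is $l^2=[-,-]$, so the only trees in $\mathcal{T}_k$ that contribute are the binary ones in which every node has exactly two children. Consequently the $2$-bracket on $\H L$ is just $\pi[-,-]$, and for $k\ge 3$ the bracket $l^k_{\H L}$ is a signed sum over binary trees with internal nodes decorated by $h[-,-]$ and root by $\pi[-,-]$. The two facts I would isolate first are: (i) by \autoref{th:higher1-auxbracket} the Gerstenhaber bracket of two disk-sequence cochains lies in $\Im(d)$; and (ii) the codifferential satisfies $h\,d(\nu_D)=\nu_D$ for non-elementary $D$, since $\nu_D\in R$ by the remarks following \autoref{def:splitting1-def}. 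Together they show that any healed bracket $h[a,b]$ lands in $R$, in fact in the span of auxiliary cochains $\nu_D$.

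For the $2$-brackets I would argue family by family, evaluating $\pi[a,b]$. The brackets among odd polygon classes are computed on multiply-wound angle sequences by the same mechanism as \autoref{th:higher1-auxbracket}; one finds the Gerstenhaber bracket is exact, so $\pi$ annihilates it and $[\nu_{B,k,\odd},\nu_{B',k',\odd}]=0$. The mixed odd--even bracket must be evaluated by hand: both Gerstenhaber products are computed on sequences winding $k+k'$ times around $B$, they fail to cancel precisely when $B=B'$, and discarding the exact and $R$-parts leaves the representative $\nu_{B,k+k',\odd}$. The puncture classes are handled through their concrete form: $\nu_{q,k,\odd}$ is the central element $\ell_q^k$ while $\nu_{q,k,\even}$ is the derivation sending $\alpha_q$ to $\ell_q^k\alpha_q$, so all brackets among them and with the sporadic derivations $\nu_\lambda$ reduce to a derivation acting on $\ell$-monomials, producing the stated Kronecker-delta and eigenvalue formulas. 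Finally the sporadic classes $\nu_\kappa,\nu_\lambda$ are purely $1$-adic derivations with additive symbol, whose Gerstenhaber bracket is again exact, giving $[\nu_\kappa,\nu_\lambda]=0$.

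To show $l^{\ge 3}=0$ I would run the binary-tree sum and use facts (i)--(ii) iteratively. In any tree with at least two internal nodes, an innermost node returns $h[a,b]\in R$, which by (i)--(ii) is a sum of auxiliary cochains $\nu_D$. Feeding these into the next node produces Gerstenhaber brackets of auxiliary cochains, which are exact by \autoref{th:higher1-auxbracket}, so the root projection $\pi$ annihilates the whole tree. For trees whose leaves include puncture or sporadic classes the same dichotomy applies for a simpler reason: the healed brackets there remain $0$- or $1$-adic derivations or central elements, and the next bracket is either exact or supported outside $H$, hence killed by $\pi$. Checking that no mixed tree escapes this dichotomy yields the vanishing of all higher brackets.

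The compatibility with convergent series is then a formal consequence of the almost-pronilpotent structure: the brackets $l^k$ strictly raise the length filtration $F^\bullet\HH(\GtlT\Sigma)$, so for each fixed output length only finitely many pairs $(a_i,b_j)$ contribute and the bilinear bracket distributes over the completed sums term by term. I expect the main obstacle to be the odd--even polygon bracket: it is the unique nonzero $2$-bracket not forced by exactness, so it demands an explicit, sign-careful evaluation of both Gerstenhaber products on the multiply-wound sequences and a verification that the surviving term is exactly $\delta_{B,B'}\nu_{B,k+k',\odd}$ with no spurious exact or complement contributions.
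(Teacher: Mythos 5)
Your proposal follows essentially the same route as the paper's proof: case-by-case evaluation of the binary brackets via the explicit $0$-adic/$1$-adic forms of the cocycles and \autoref{th:higher1-auxbracket}, vanishing of the higher brackets because every internal node of a Kadeishvili tree outputs a combination of auxiliary cochains $\nu_D$ whose further brackets are exact and hence killed by the root projection $\pi$, and term-by-term distribution over completed sums via the length filtration. The only cosmetic deviations are that $[\nu_\kappa,\nu_\lambda]$ is in fact identically zero in $\HC(\GtlT \Sigma)$ rather than merely exact, and that for internal nodes whose inputs are puncture or sporadic classes the bracket already lies in $H\oplus R$, so the codifferential $h$ annihilates it outright rather than returning a nonzero derivation; neither affects the argument.
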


\begin{proof}
We check all individual brackets in sequence. After that, we comment on brackets of series as well as the higher brackets.

\begin{itemize}
\item Regard $ [ν_{q, k, \odd}, ν_{q', k', \odd}] $. It is immediate that the bracket vanishes in $ \HC(\GtlT Σ) $, therefore also vanishes in cohomology.

\item Regard $ [ν_{q, k, \even}, ν_{q', k', \odd}] $. In $ \HC(\GtlT Σ) $, this bracket has only a 0-adic component. We evaluate
\begin{align*}
[ν_{q, k, \even}, ν_{q', k', \odd}]^0 &= ν_{q, k, \even} (ν_{q', k', \odd}) = δ_{q, q'} ℓ_q^k ν_{q', k', \odd}^0 = δ_{q, q'} ν_{q, k + k', \odd}^0.
\end{align*}
Thus, the bracket equals $ ν_{q, k+k', \odd} $ which is already a cohomology class.

\item Regard $ [ν_{q, k, \even}, ν_{q', k', \even}] $. In $ \HC(\GtlT Σ) $, this bracket has only a 1-adic component and this component is a derivation. In case $ q = q' $, then for any indecomposable angle $ α $ winding around $ q $ we have
\begin{align*}
[ν_{q, k, \even}, ν_{q', k', \even}] (α) &= ν_{q, k, \even} (ν_{q, k', \even} (α)) - ν_{q, k', \even} (ν_{q, k, \even} (α)) \\
&= ν_{q, k, \even} (δ_{α, α_1} ℓ^{k'} α) - ν_{q, k', \even} (δ_{α, α_1} ℓ^k α) \\
&= δ_{α, α_1} \big(k' ℓ^{k'+k} α + ℓ^{k'+k} δ_{α_1, α} α - k ℓ^{k+k'} α - ℓ^{k+k'} δ_{α_1, α} α\big) \\
&= δ_{α, α_1} (k' - k) ℓ^{k+k'} α.
\end{align*}
We have used that $ ν_{q, k, \even} (ℓ_q) = ν_{q, k, \even} (α_s … α_1) = ℓ^{k+1}_q $ and $ ν_{q, k, \even} (ℓ_q^i) = i ℓ^{k+i}_q $. We conclude that $ [ν_{q, k, \even}, ν_{q', k', \even}] $ is precisely cohomology element $ (k' - k) ν_{q, k+k', \even} $.

\item Regard $ [ν_λ, ν_{q, k, \odd}] $. In $ \HC(\GtlT Σ) $, this bracket has only a 0-adic component given by
\begin{equation*}
[ν_λ, ν_{q, k, \odd}]^0 = ν_λ^1 (ν_{q, k, \odd}^0) = k λ (ℓ_q) ℓ_q^k.
\end{equation*}

\item Regard $ [ν_λ, ν_{q, k, \even}] $. In $ \HC(\GtlT Σ) $, this bracket has only a 1-adic component and it is a derivation. For any indecomposable angle $ α $ winding around $ q $, we get
\begin{align*}
[ν_λ, ν_{q, k, \even}]^1 (α) &=  ν_λ (δ_{α, α_1} ℓ_q^k α) - ν_{q, k, \even} (ν_λ (α)) \\
& = δ_{α, α_1} (k λ_{ℓ_q} α + ℓ_q λ_α) - δ_{α, α_1} ℓ_q^k (λ_α α) = δ_{α, α_1} k λ_{ℓ_q} α.
\end{align*}
The bracket vanishes on all indecomposable angles not winding around $ q $. We conclude that the bracket is precisely the cohomology element $ k λ_{ℓ_q} ν_{q, k, \even} $.

\item Regard $ [ν_κ, ν_λ] $. In $ \HC(\GtlT Σ) $, this bracket has only a 1-adic componend and it is a derivation. For any indecomposable angle $ α $ we have
\begin{equation*}
[ν_κ, ν_λ]^1 (α) = κ_α λ_α α - λ_α κ_α α = 0.
\end{equation*}
This shows that $ [ν_κ, ν_λ] $ vanishes in $ \HC(\GtlT Σ) $.

\item Regard $  [ν_{B, k, \odd}, ν_{B', k', \odd}] $. This case is very similar to \autoref{th:higher1-auxbracket}.

\item Regard $ η = [ν_{B, k, \odd}, ν_{B', k', \even}] $. Let $ α_1, … α_l $ be the interior angles of $ B $ and $ β_1, …, β_m $ the interior angles of $ B' $. Evaluating the Gerstenhaber bracket yields only nonzero values on the following angle sequences:
\begin{align*}
ν_{B, k, \odd} (α_{j+kl-1}, …, α_{j+t+1}, ν_{B', k', \even} (β_{i+k'm}, …, β_i), α_{j+t-1}, …, α_j γ), \\
ν_{B, k, \odd} (β α_{j+kl-1}, …, α_{j+t+1}, ν_{B', k', \even} (β_{i+k'm}, …, β_i), α_{j+t-1}, …, α_j), \\ 
ν_{B, k, \odd} (α_{j+kl-1}, …, α_{j+1}, ν_{B', k', \even} (α_j β β_{i+k'm}, …, β_i γ)), \\
ν_{B, k, \odd} (ν_{B', k', \even} (β β_{i+k'm}, …, β_i γ α_{j+kl-1}), α_{j+kl-2}, …, α_j), \\
%
ν_{B, k, \even} (β α_{j+kl}, …, ν_{B', k', \odd} (β_{i+k'm-1}, …, β_i α_{j+t}), …, α_j γ), \\
ν_{B, k, \even} (β α_{j+kl}, …, ν_{B', k', \odd} (α_{j+t} β_{i+k'm-1}, …, β_i), …, α_j γ), \\
ν_{B, k, \even} (β α_{j+kl}, …, ν_{B', k', \odd} (β_{i+k'm-1}, …, β_i α_j γ)), \\
ν_{B, k, \even} (ν_{B', k', \odd} (β α_{j+kl} β_{i+k'm-1}, …, β_i), …, α_j γ).
\end{align*}
The first and second row appear whenever coincidentally for some indices $ i, j, t $ we have $ β_i = α_{j+t} $. The third and fourth row appear whenever coinicidentally for some indices $ i, j $ and some angles $ β $ and $ γ $ we have $ α_j β β_i γ ≠ 0 $. The fifth and sixth row appear whenever coincidentally for some indices $ i, j, t $ we have $ β_i α_{j+t} ≠ 0 $ or $ α_{j+t} β_{i+k'm-1} ≠ 0 $, respectively. The seventh and eighth row appear whenever for coincidentally for some indices $ i, j $ and some angles $ β, γ $ we have $ β_i α_j γ ≠ 0 $ and $ β α_j ≠ 0 $, or $ β α_{j+kl} β_{i+k'm-1} ≠ 0 $ and $ β α_j γ ≠ 0 $, respectively.

The precise values of the terms including their signs from the Gerstenhaber bracket are the following:
\begin{align*}
(-1)^{|γ|} γ, \\
β, \\
(-1)^{|β β_i γ|} β β_i γ, \\
β β_i γ, \\
(-1)^{1+1+‖α_j γ‖ + … + ‖α_{j+t-1}‖ + |α_{j+t}| + |γ|} β α_j γ, \\
(-1)^{1+1+‖α_j γ‖ + … + ‖α_{j+t-1}‖ + |γ|} β α_j γ, \\
(-1)^{1+1+|α_j γ|} β α_j γ, \\
(-1)^{1+1+‖α_j γ‖ + … + ‖α_{j+kl-1}‖ + |γ|} γ.
\end{align*}

Let us draw the conclusion. The terms in the third, fourth, fifth, sixth, seventh and eighth row all lie in $ R $. If $ B = B' $, then up to terms from $ R $ we have $ [ν_{B, k, \odd}, ν_{B, k', \even}] = ν_{B, k+k', \odd} $. If $ B ≠ B' $, then $ [ν_{B, k, \odd}, ν_{B, k', \even}] $ lies in $ R $ and therefore vanishes in cohomology. This finishes the present part of the proof.

\item Regard $ η = [ν_{B, k, \odd}, ν_{q, k', \odd}] $. Let $ β_1, …, β_l $ be the interior angles of $ B $. Since $ ν_{q, k', \odd} $ has only a 0-adic component, the bracket only takes the following angle sequences:
\begin{align*}
ν_{B, k, \odd} (β_{j+kl-1}, …, β_{j+1}, ν_{q, k', \odd}^0) &= ± β_j^{-1} ℓ_q^{k'}.
ν_{B, k, \odd} (ν_{q, k', \odd}^0, β_{j+kl-1}, …, β_{j+1}) &= ℓ_q^{k'} β_j^{-1}.
\end{align*}
These two rows appear whenever for some index $ j $ the angle $ β_j $ winds around $ q $. The inverse notation $ ℓ_q^{k'} β^{-1} $ denotes shortening of the $ k' $-fold turn around $ q $ by the angle $ β $. Both cochains lie in $ R $ and therefore the bracket vanishes.

\item Regard $ [ν_{B, k, \odd}, ν_{q, k', \even}] $. Let $ β_1, …, β_l $ be the interior angles of $ B $. Since $ ν_{q, k', \even} $ has only a 1-adic component, which strictly lengthens its input angle by a scalar multiple of $ k' $ full turns, the bracket only takes the following angle sequences:
\begin{align*}
ν_{B, k, \odd} (β_{j+kl-1}, …, ν_{q, k', \even} (β_j γ)) &= (-1)^{|γ|} \#ν_{q, k', \even} (β_j γ) ℓ_q^{k'} γ, \\
ν_{B, k, \odd} (ν_{q, k', \even} (β β_{j+kl}), …, β_{j+1}) &= \#ν_{q, k', \even} (β β_j) ℓ_q^{k'} β, \\
ν_{q, k', \even} (ν_{B, k, \odd} (β_{j+kl-1}, …, β_j γ)) &= (-1)^{|γ|} \#ν_{q, k', \even} (γ) ℓ_q^{k'} γ, \\
ν_{q, k', \even} (ν_{B, k, \odd} (β β_{j+kl}, …, β_{j+1})) &= \#ν_{q, k', \even} (γ) ℓ_q^{k'} β.
\end{align*}
These first and second rows appear whenever for some index $ j $ the angle $ β_j $ winds around $ q $. The third and fourth row appear whenever $ j $ is an index such that $ β_j $ winds around $ q $. We have used the notation $ ν_{q, k', \even} (β) = \# ν_{q, k', \even} (β) β ℓ_q^{k'} $ for any angle $ β $ which winds around $ q $. We conclude that all cochains lie in $ R $ and therefore the bracket vanishes.

\item Regard $ η = [ν_{B, k, \odd}, ν_λ] $. Let $ β_1, …, β_l $ be the interior angles of $ B $. Since $ ν_λ $ has only a 1-adic component, the bracket only takes the following angle sequences:
\begin{align*}
ν_{B, k, \odd} (β β_{j+kl-1}, …, ν_λ (β_i), …, β_j) &= λ(β_i) β, \\
ν_{B, k, \odd} (β_{j+kl-1}, …, ν_λ (β_i), …, β_j γ) &= (-1)^{|γ|} λ(β_i) γ, \\
ν_{B, k, \odd} (ν_λ (β β_{j+kl-1}), …, β_j) &= λ(β β_{j-1}) β, \\
ν_{B, k, \odd} (β_{j+kl}, …, ν_λ (β_{j+kl-1} γ)) &= (-1)^{|γ|} λ(β_{j-1} γ) γ, \\
ν_λ (ν_{B, k, \odd} (β β_{j+kl-1}, …, β_j)) &= λ(β) β, \\
ν_λ (ν_{B, k, \odd} (β_{j+kl-1}, …, β_j γ)) &= (-1)^{|γ|} λ(γ) γ.
\end{align*}
Summing up the terms, using additivity of $ λ $ as in $ λ(β β_{j-1}) = λ(β) + λ(β_{j-1}) $, we arrive at
\begin{align*}
η (β β_{j+kl-1}, …, β_j) = k \sum_{i = 1}^l λ(β_i) β, \\
η (β_{j+kl-1}, …, β_j γ) = k \sum_{i = 1}^l λ(β_i) (-1)^{|γ|} γ.
\end{align*}
We conclude $ η = k \sum_{i = 1}^l λ(β_i) ν_{B, k, \odd} $.
\end{itemize}
To wrap up the proof, let us comment on the brackets of converging series. Indeed, the projection $ π $, differential $ d $ and codifferential $ h $ map a cochain of length $ l $ to a cochain of length in the range $ l ± 1 $. Therefore the arguments in the present proof sum up in the case of infinite series as well.

Regarding the higher brackets, we make the following remarks. The higher brackets can be evaluated by means of Kadeishvili trees with inputs $ h_1, …, h_k ∈ H $. Every non-leaf non-root node is decorated by $ h [-, -] $, and the root is decorated by $ π [-, -] $. We immediately observe that the only nonvanishing nodes with two cohomology inputs are of the form $ h [ν_{B, k, \odd}, ν_{B', k', \odd}] $. By \autoref{th:higher1-auxbracket}, the result is a sum of auxiliary cochains $ ν_D $, which behave mostly in the same way as the set of cochains $ ν_{B, k, \odd} $. In particular, the only nonvanishing non-leaf nodes in the tree are linear combinations of auxiliary cochains $ ν_D $. The root of the tree therefore carries only projections of the form $ π [ν_D, ν_{D'}] $ which vanish thanks to \autoref{th:higher1-auxbracket}. This finishes the proof.
\end{proof}


Next we determine the Maurer-Cartan element $ W_H $ in $ \HH(\GtlT Σ) $ which corresponds to the Maurer-Cartan element $ W = μ^{≥3} $ in $ \HC(\GtlT Σ) $. The first step is to regard the inclusion $ L_∞ $-morphism $ i: \HH(\GtlT Σ) → \HC(\GtlT Σ) $. This morphism is explicitly computable by means of trees, but we shall not need the entire datum of $ i $ for our present goals. We shall make an educated guess for the Maurer-Cartan element $ W_H $ and define
\begin{equation*}
W_H = \sum_{B ∈ Σ_2} ν_{B, 1, \odd}.
\end{equation*}

\begin{lemma}
\label{th:higher1-MCpush}
$ W_H ∈ \HH(\GtlT Σ) $ is an almost pronilpotent Maurer-Cartan. We have
\begin{equation*}
i^{\MC} (W_H) = μ^{≥3}, \quad π^{\MC} (μ^{≥3}) = W_H.
\end{equation*}
\end{lemma}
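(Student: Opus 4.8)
The plan is to prove the three assertions in order, spending almost all the effort on the identity $i^{\MC}(W_H) = \mu^{\geq 3}$, from which $\pi^{\MC}(\mu^{\geq 3}) = W_H$ will follow formally. For the first assertion I observe that $W_H = \sum_{B \in \Sigma_2} \nu_{B,1,\odd}$ is a finite sum of cochains each concentrated in length $\geq 3$, so $W_H \in F^1 \HH(\GtlT\Sigma)$ and is visibly almost pronilpotent. Since $\HH(\GtlT\Sigma)$ is minimal, $l^1 = 0$ and its Maurer-Cartan equation collapses to $\tfrac12\, l^2(W_H, W_H) = 0$ once we know that all higher brackets vanish. Both inputs are furnished by \autoref{th:higher1-brackets}: it records $[\nu_{B,k,\odd}, \nu_{B',k',\odd}] = 0$ and the vanishing of all higher brackets, whence $[W_H, W_H] = 0$ and $W_H \in \MC(\HH(\GtlT\Sigma))$.

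For the main identity I would avoid expanding the Kadeishvili tree formula for $i^{\MC}$ and matching disk-gluing combinatorics term by term — this is genuinely delicate, since a composite disk sequence can be assembled from its elementary polygons in many orders, and the naive count of assemblies does not give coefficient one. Instead I would characterise $\mu^{\geq 3}$ by a normal form and invoke uniqueness. The key point is that \emph{both} elements are Maurer-Cartan elements of $\HC(\GtlT\Sigma)$ of the shape $W_H + (\text{an element of } R)$. On one side, $i^{\MC}(W_H)$ is Maurer-Cartan because $i$ is an almost pronilpotent $L_\infty$-morphism; moreover $i^1(W_H) = W_H \in H$, while every higher component $i^{k \geq 2}$ has its root decorated by $h$ and therefore lands in $R$, so $i^{\MC}(W_H) = W_H + r$ with $r \in R$. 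On the other side, unwinding the definition of $\Gtl\Sigma$ gives $\mu^{\geq 3} = \sum_D \nu_D$ summed over all disk sequences $D$; splitting off the elementary polygons and using $\nu_{B,1,\odd} = \nu_B$ yields
\begin{equation*}
\mu^{\geq 3} = \sum_{B \in \Sigma_2} \nu_B + \sum_{D \text{ non-elementary}} \nu_D = W_H + r',
\end{equation*}
where $r' \in R$ because $\nu_D \in R$ for every non-elementary $D$. That $\mu^{\geq 3}$ is itself Maurer-Cartan is the content of the earlier lemma presenting $\HC(\Gtl\Sigma)$ as a spectral DGLA.

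It then suffices to show that a Maurer-Cartan element of the form $W_H + X_R$ with $X_R \in R$ is unique. Writing out the Maurer-Cartan equation and using $dW_H = 0$ (as $W_H \in H \subseteq \Ker d$) gives $dX_R = -\tfrac12 [W_H + X_R, W_H + X_R]$; applying $h$ and using $h\,dX_R = X_R$ for $X_R \in R$ converts this into the fixed-point equation
\begin{equation*}
X_R = -\tfrac12\, h\bigl[\,W_H + X_R,\; W_H + X_R\,\bigr].
\end{equation*}
Since a Gerstenhaber bracket of cochains of length $\geq 3$ followed by $h$ strictly raises the length filtration, the length-$n$ component of $X_R$ depends only on strictly shorter components, so by completeness of the filtration the equation determines $X_R$ uniquely, length by length. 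Hence $i^{\MC}(W_H)$ and $\mu^{\geq 3}$ both coincide with this unique element, giving $i^{\MC}(W_H) = \mu^{\geq 3}$. Finally $\pi^{\MC}(\mu^{\geq 3}) = W_H$ drops out with no further computation, exactly as in \autoref{rem:spectral-pi-i}, via $\pi \circ i = \id$: one has $\pi^{\MC}(\mu^{\geq 3}) = \pi^{\MC}(i^{\MC}(W_H)) = (\pi \circ i)^{\MC}(W_H) = W_H$.

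The main obstacle I anticipate is not conceptual but bookkeeping: one must justify carefully that $i^{k \geq 2}$ lands entirely in $R$ (so that $i^{\MC}(W_H)$ really has the claimed normal form) and that the fixed-point iteration converges in the almost-pronilpotent setting. The delicate disk-assembly combinatorics of the direct tree expansion — where distinct gluing orders threaten to produce coefficients larger than one — is precisely what the uniqueness argument is designed to sidestep, replacing it with the single clean input $h\,(d\mu^{\geq 3}) = r'$ coming from the Maurer-Cartan equation for $\mu^{\geq 3}$.
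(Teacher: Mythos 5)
Your proof is correct and, like the paper's, ultimately rests on a uniqueness argument rather than on matching Kadeishvili-tree coefficients against disk-gluing combinatorics; but the uniqueness mechanism you use is genuinely different. The paper first shows (via \autoref{th:higher1-auxbracket}) that every tree with at least two leaves outputs a scalar multiple of an auxiliary cochain $\nu_D$ for a non-elementary disk sequence $D$, so that $i^{\MC}(W_H)$ is an $A_\infty$-structure supported on disk sequences which agrees with $\mu^{\geq 3}$ on elementary polygons, and then asserts that such an $A_\infty$-structure is unique. You instead only record that $i^1(W_H) = W_H$ and that $i^{k\geq 2}$ lands in $R$ (root decorated by $h$), and prove uniqueness of Maurer--Cartan elements in the slice $W_H + R$ via the fixed-point equation $X_R = -\tfrac{1}{2}\, h[W_H + X_R, W_H + X_R]$. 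This buys you two things: you need no analysis of the support of the tree outputs (so \autoref{th:higher1-auxbracket} is not needed for this lemma at all), and your uniqueness step is an explicit filtration induction where the paper's is only asserted. The one point you should make explicit: the induction on length requires that $X_R$ be concentrated in lengths $\geq 3$, since otherwise a low-length component of $X_R$ could feed back into the recursion at the same or lower length ($h$ may lower length by one, so the strict increase needs both bracket inputs in $F^3$). Both candidates do satisfy this --- $\mu^{\geq 3}$ by definition, and $i^{\MC}(W_H)$ because each node $h[-,-]$ applied to inputs in $F^3$ lands in $F^4$ --- so this is the ``bookkeeping'' you already flagged and it closes cleanly. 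Your treatment of the first assertion, deducing the Maurer--Cartan property of $W_H$ from $[\nu_{B,1,\odd},\nu_{B',1,\odd}] = 0$ and the vanishing of higher brackets in \autoref{th:higher1-brackets}, is more explicit than the paper's, which leaves that part essentially unaddressed.
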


\begin{proof}
We divide the proof into three steps. In the first step, we explain the strategy for computing $ i^{\MC} (W_H) $ and the trees to be evaluated. In the second step, we perform this evaluation as far as necessary. In the third step, we draw the conclusion.

For the first step, recall from \autoref{sec:prelim} that $ i^{\MC} (W_H) $ is given by the sum of weighted expressions obtained from trees with any number of inputs $ k ≥ 1 $ and with the leaves decorated by cochains $ ν_{B_1, 1, \odd}, …, ν_{B_k, 1, \odd} $ and $ h [-, -]_{\HC(\GtlT Σ)} $ on each node of the tree. In the remainder of the proof, we evaluate these trees and the resulting expressions. For temporary purposes, let us split $ μ^{≥3} = μ^{≥3}_1 + μ^{≥3}_{≥2} $ where $ μ^{≥3}_1 = \sum_{B ∈ Σ_2} ν_{B, 1, \odd} $ contains all cochains which take elementary polygons as inputs and $ μ^{≥3}_{≥2} $ contains all cochains which take all non-elementary disk sequences as inputs. We note that $ i^1 (W_H) = μ^{≥3}_1 $.

For the second part, we show that the trees with at least 2 leaves yield only cochains which, up to sign and weight factor, already lie in $ μ^{≥3}_{≥2} $. Indeed, thanks to \autoref{th:higher1-auxbracket} we have that $ [ν_{D_1}, ν_{D_2}] = ν_{D_1 ∪ D_2} $. Now let $ B_1, …, B_k $ be a sequence of $ k ≥ 2 $ elementary polygons. Then any result component of the trees with inputs $ ν_{B_1, 1, \odd}, …, ν_{B_k, 1, \odd} $ is a scalar multiple of a cochain already present in $ μ^{≥3}_{≥2} $.

For the third part, we claim that $ i^{\MC} (W_H) = μ^{≥3} $. Indeed, both $ i^{\MC} (W_H) $ and $ μ^{≥3} $ define Maurer-Cartan elements in $ \HC(\GtlT Σ) $, in other words $ A_∞ $-structures on $ \GtlT Σ $. Since they agree on elementary polygons and only take values on disk sequences, instead of arbitrary angle sequences, we conclude that $ i^{\MC} (W_H) = μ^{≥3} $. Thanks to \autoref{rem:spectral-pi-i} we conclude $ π^{\MC} (W) = W_H $ as well, which finishes the proof.
\end{proof}

\subsection{Computation of $ \H(\HH(\GtlT Σ)_{W_H}) $}
In this section, we compute the minimal model of $ \HH(\GtlT Σ)_{W_H} $. We start by describing a homological splitting of $ \HH(\GtlT Σ)_{W_H} $. Next, we compute the $ L_∞ $-structure on $ \H(\HH(\GtlT Σ)_{W_H}) $. As it turns out, the cohomology is simply an $ L_∞ $-subalgebra of $ \HH(\GtlT Σ) $ which only contains the cohomology classes $ ν_1 $, $ ν_{k, q, \odd} $, $ ν_{k, q, \even} $ and $ ν_λ $.

We start by recalling that $ \HH(\GtlT Σ) $ is a DGLA with zero differential and explicitly provided brackets. The sum $ W_H = \sum_{B ∈ Σ_2} ν_{B, k, \odd} $ is a Maurer-Cartan element on $ \HH(\GtlT Σ) $, and the DGLA $ \HH(\GtlT Σ)_{W_H} $ has the same underlying graded vector space as $ \HH(\GtlT Σ) $ and the same bracket but has differential given by $ d = [W_H, -] $. We are now ready to construct a homological splitting $ H ⊕ \Im(d) ⊕ R $ for $ \HH(\GtlT Σ)_{W_H} $.

\begin{definition}
The cohomology space $ H $ is the span of the following set of cochains:
\begin{itemize}
\item $ ν_1 $.
\item $ ν_{q, k, \odd} $ for every puncture $ q $ and $ k ≥ 1 $.
\item $ ν_{q, k, \even} $ for every puncture $ q $ and $ k ≥ 1 $.
\item $ ν_λ $ for every $ λ ∈ \sporadicG $.
\end{itemize}
The complement space $ R $ is the span of the following set of cochains:
\begin{itemize}
\item $ ν_{B, k, \even} $ for every elementary polygon $ B $ and $ k ≥ 1 $.
\item $ ν_λ $ for $ λ ∈ \sporadicP $.
\end{itemize}
\end{definition}

\begin{lemma}
The spaces $ H $ and $ R $ provides a homological splitting $ H ⊕ \Im(d) ⊕ R $ for $ \HH(\GtlT Σ)_{W_H} $.
\end{lemma}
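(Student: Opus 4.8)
The plan is to compute the twisted differential $d = [W_H, -]$ explicitly on every basis cochain and read off $\Ker(d)$ and $\Im(d)$ directly. Since $\HH(\GtlT Σ)$ carries zero differential and, by \autoref{th:higher1-brackets}, all of its higher brackets vanish, the twisting formula collapses to $l^1_{W_H}(a) = l^2(W_H, a) = [W_H, a]$. Hence every value $d(a) = \sum_{B ∈ Σ_2} [ν_{B, 1, \odd}, a]$ is a finite combination of brackets already tabulated in \autoref{th:higher1-brackets}, and $d^2 = 0$ is automatic because $W_H ∈ \MC(\HH(\GtlT Σ))$ by \autoref{th:higher1-MCpush}.

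First I would evaluate $d$ on the generators of $R$. From $[ν_{B, k, \odd}, ν_{B', k', \even}] = δ_{B, B'} ν_{B, k+k', \odd}$ I obtain $d(ν_{B, k, \even}) = ν_{B, k+1, \odd}$. Identifying the polygon sporadic cochain $ν_B$ with $ν_{λ_B}$ for $λ_B ∈ \sporadicP$ and using the bracket $[ν_{B', 1, \odd}, ν_λ] = \big(\sum_i λ(β_i)\big) ν_{B', 1, \odd}$, where the sum runs over the interior angles $β_i$ of $B'$, I obtain $d(ν_B) = ν_{B, 1, \odd}$: indeed $λ_B$ sums to $1$ over the angles of $B$ and vanishes on the angles of any $B' ≠ B$ (each interior angle lies in a unique polygon), so the coefficient equals $δ_{B, B'}$. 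As $B$ ranges over $Σ_2$ and $k$ over $k ≥ 1$, these values run through all generators $ν_{B, k, \odd}$, so $\Im(d) = \vspan\{ν_{B, k, \odd}\}$ (completed), and $d$ is injective on $R$ because it sends the chosen basis of $R$ bijectively to distinct basis cochains of $\Im(d)$.

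Next I would check that $d$ kills every generator of $H$ and of $\Im(d)$. The crucial contrast with the previous step is that for $λ ∈ \sporadicG$ the relation $\sum_i λ(β_i) = 0$ on every elementary polygon forces $[ν_{B, 1, \odd}, ν_λ] = 0$, hence $d(ν_λ) = 0$; this is exactly what separates the surviving sporadic classes from those in $R$. The brackets $[ν_{B, 1, \odd}, ν_{q, k, \odd}]$, $[ν_{B, 1, \odd}, ν_{q, k, \even}]$ and $[ν_{B, 1, \odd}, ν_{B', k', \odd}]$ all vanish by the table, giving $d(ν_{q, k, \odd}) = d(ν_{q, k, \even}) = d(ν_{B, k, \odd}) = 0$. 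The only value not covered by \autoref{th:higher1-brackets} is $d(ν_1)$, which I would treat by hand: $ν_1$ is $0$-adic, so $ν_1 · W_H = 0$, while $W_H · ν_1$ inserts the identities $\sum_a \id_a$ into $ν_{B, 1, \odd}$ and vanishes because $ν_{B, 1, \odd}$ is supported on genuine disk sequences and annihilates any input string containing an identity; thus $d(ν_1) = [W_H, ν_1] = 0$.

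Assembling these computations gives $\Ker(d) = H ⊕ \Im(d)$, while $R ∩ \Ker(d) = 0$ and $H ⊕ \Im(d) ⊕ R$ accounts for every basis cochain, which is the asserted splitting. The only genuinely delicate point is the interaction with completions: $H$ and $R$ are spans whereas $\HH(\GtlT Σ)$ is the completed space of \autoref{def:prelim-completion}, so I would carry out the decomposition length by length, using that $d$ acts in a controlled, per-polygon manner — it sends the length-$(kl_B + 1)$ cochain $ν_{B, k, \even}$ to the length-$(k+1)l_B$ cochain $ν_{B, k+1, \odd}$, and the length-$1$ cochain $ν_B$ to the length-$l_B$ cochain $ν_{B, 1, \odd}$, where $l_B$ is the number of sides of $B$. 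Because this pairing respects the length grading and is a bijection between the basis of $R$ and the basis of $\Im(d)$, the splitting extends verbatim to completions with no convergence issue, and I expect this bookkeeping rather than any individual bracket to be the main obstacle.
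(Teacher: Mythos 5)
Your proposal is correct and follows essentially the same route as the paper: both proofs reduce the twisted differential to $d = [W_H,-]$ (using that $\HH(\GtlT Σ)$ has zero differential and no higher brackets), read off its values on the listed generators from \autoref{th:higher1-brackets}, and check the three conditions of a splitting from that table. You are somewhat more explicit than the paper on two points it glosses over, namely $d(ν_1) = 0$ and the length-by-length compatibility with completions, and both of your arguments there are sound.

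The one place where you silently diverge from the paper's proof is the polygon sporadic classes. You take the preliminary definition of $\sporadicP$ at face value (one class $λ_B$ per polygon, with $\sum_i λ_B(β_i) = δ_{B,B'}$ on the angles of $B'$) and conclude $d(ν_{λ_B}) = ν_{B,1,\odd}$ for every $B$, making $d$ a bijection from the basis of $R$ onto $\{ν_{B,k,\odd}\}$. The paper's proof instead works with a version of $\sporadicP$ from which one polygon $B_0$ is excluded, and recovers $ν_{B_0,1,\odd}$ as a sum of the remaining coboundaries. This signals a linear relation among the classes $\{ν_{λ_B}\}_{B ∈ Σ_2}$ modulo $\sporadicG$ and coboundaries that the preliminary definition does not make visible; if that relation holds, your claim that the $d(ν_{λ_B})$ are $|Σ_2|$ independent coboundaries, and hence the injectivity of $d$ on $R$, would need the same $B_0$ adjustment the paper makes. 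As stated against the paper's own definitions your computation is the straightforward one, but you should verify which normalization of $\sporadicP$ is actually in force before asserting the bijection.
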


\begin{proof}
Let us explain that $ \HH(\GtlT Σ)_{W_H} $ is the sum of the three spaces. Let $ B $ be an elementary polygon, and denote by $ B_0 $ the elementary polygon excluded from the polygon sporadic classes $ \sporadicP $. In case $ B ≠ B_0 $, we have $ ν_{B, 1, \odd} = d(ν_λ) $ where $ λ ∈ \sporadicP $ is the sporadic index associated with the polygon $ B $. In case $ B = B_0 $, we have $ ν_{B, k, \odd} = \sum_{λ ∈ \sporadicP, λ ≠ B_0} d(ν_λ) $. Furthermore we have $ ν_{B, k, \odd} = d(ν_{B, k-1, \even}) $ for $ k ≥ 2 $.

Next, we observe that $ H ⊂ \Ker(d) $ and the sum $ H ⊕ \Im(d) $ is direct. To see that $ R ∩ \Ker(d) = 0 $, regard the differential of any element of $ R $:
\begin{equation*}
d\big(\sum_{B ∈ Σ_2, k ≥ 1} c_{B, k} ν_{B, k, \even} + \sum_{λ ∈ \sporadicP} c_λ ν_λ\big) = \sum_{B ∈ Σ_2, k ≥ 1} c_{B, k} ν_{B, k+1, \odd} + \sum_{B ∈ Σ_2, B ≠ B_0} c_B ν_{B, 1, \odd}.
\end{equation*}
If this differential vanishes, then the individual scalars $ c_{B, k} $ and $ c_B $ vanish. Therefore $ R ∩ \Ker(d) = 0 $. This finishes the proof.
\end{proof}

\begin{proposition}
\label{th:gtl-H2-brackets}
In $ \H(\HH(\GtlT Σ)_{W_H}) $ we have the following brackets. All higher brackets vanish.
\begin{align*}
[ν_{q, k, \odd}, ν_{q', k', \odd}] &= 0, \\
[ν_{q, k, \even}, ν_{q', k', \odd}] &= δ_{q,q'} ν_{q, k+k', \odd}, \\
[ν_{q, k, \even}, ν_{q', k', \even}] &= δ_{q,q'} ν_{q, k+k', \even}, \\
[ν_λ, ν_{q, k, \odd}] &= k λ_{ℓ_q} ν_{q, k, \odd}, \\
[ν_λ, ν_{q, k, \even}] &= k λ_{ℓ_q} ν_{q, k, \even}, \\
[ν_κ, ν_λ] &= 0.
\end{align*}
\end{proposition}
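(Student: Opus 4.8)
The plan is to read off the minimal $ L_∞ $-structure on $ \H(\HH(\GtlT Σ)_{W_H}) = H $ from the Kadeishvili tree formula recalled in \autoref{sec:prelim}, applied to the DGLA $ \HH(\GtlT Σ)_{W_H} $ equipped with the homological splitting just fixed. The decisive simplification is that $ \HH(\GtlT Σ)_{W_H} $ is an honest DGLA: its only nonvanishing structure maps are the differential $ l^1 = [W_H, -] $ and the Gerstenhaber bracket $ l^2 = [-, -] $, the latter being literally the bracket of $ \HH(\GtlT Σ) $ computed in \autoref{th:higher1-brackets}. Consequently any Kadeishvili tree decorating some node with an $ l^{≥3} $ vanishes, so only binary trees contribute.

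First I would treat the binary bracket. By the Kadeishvili formula the transferred $ 2 $-bracket equals $ l^2_{\H L}(a_1, a_2) = π[a_1, a_2] $ with no tree corrections. For $ a_1, a_2 $ ranging over the generators $ ν_1 $, $ ν_{q, k, \odd} $, $ ν_{q, k, \even} $, $ ν_λ $ of $ H $, the bracket $ [a_1, a_2] $ is already tabulated in \autoref{th:higher1-brackets}; I would transcribe those six values and check case by case that each result again lies in $ H $, noting in addition that every bracket involving $ ν_1 $ vanishes. Since $ [a_1, a_2] ∈ H $ the projection $ π $ restricts to the identity, so $ l^2_{\H L} $ reproduces exactly the six brackets claimed in the proposition. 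This step is pure bookkeeping once \autoref{th:higher1-brackets} is available.

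The substance of the argument is the vanishing of all higher brackets, and the structural fact driving it is the closedness $ [H, H] \subseteq H $ recorded in the previous step. I would argue as follows: any tree contributing to $ l^k_{\H L} $ with $ k ≥ 3 $ is binary, hence contains at least one non-root node both of whose children are leaves; this node evaluates to $ h\, l^2(a_i, a_j) = h[a_i, a_j] $ with $ a_i, a_j ∈ H $. Because $ [a_i, a_j] ∈ H $ and the codifferential $ h $ vanishes on $ H $ by the definition of the splitting, this node is zero and the whole tree vanishes. Hence $ l^k_{\H L} = 0 $ for all $ k ≥ 3 $.

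The one point requiring genuine care is the verification $ [H, H] \subseteq H $: one must confirm that none of the six brackets of \autoref{th:higher1-brackets} acquires a component in $ \Im(d) ⊕ R $, i.e.\ in the span of the cochains $ ν_{B, k, \even} $ or the polygon sporadic $ ν_λ $, and that the brackets with $ ν_1 $ vanish outright rather than producing such a component. Since the bracket of the twisted DGLA coincides with the untwisted Gerstenhaber bracket, no computation beyond \autoref{th:higher1-brackets} is needed, and this closedness check is the only real obstacle; sign conventions may be suppressed as elsewhere in the paper. This simultaneously substantiates the earlier assertion that $ \H(\HH(\GtlT Σ)_{W_H}) $ is simply an $ L_∞ $-subalgebra of $ \HH(\GtlT Σ) $.
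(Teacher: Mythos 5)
Your proposal is correct and follows essentially the same route as the paper: the transferred binary bracket is $\pi[-,-]$ restricted to $H$, the values are read off from \autoref{th:higher1-brackets} and already lie in $H$, and all higher brackets vanish because every Kadeishvili tree with at least three leaves contains a node evaluating to $h[a_i,a_j]$ with $[a_i,a_j]\in H$, on which $h$ vanishes. Your write-up is in fact somewhat more explicit than the paper's about why only binary trees contribute and where the killing node sits, but the argument is the same.
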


\begin{proof}
By definition, each of these brackets is given by the projection to $ H $ of the result of the bracket in $ \HH(\GtlT Σ)_{W_H} $. We have computed the brackets in $ \HH(\GtlT Σ) $ in \autoref{th:higher1-brackets}, and each of them already lies in the cohomology space $ H $. In particular, their projection to $ H $ agrees with the values from \autoref{th:higher1-brackets} and any application of the codifferential $ h $ evaluates to zero. This finishes the proof.
\end{proof}

\subsection{Main result}
In this section, we construct the spectral inclusion
\begin{equation*}
\H(\HH(\GtlT Σ)_{W_H}) \overset{i'}{→} \HH(\GtlT Σ)_{W_H} \overset{i_{W_H}}{→} \HC(\GtlT Σ)_{μ^{≥3}} = \HC(\Gtl Σ).
\end{equation*}
We prove that it is an $ L_∞ $-quasi-isomorphism and explain how it transports Maurer-Cartan elements of $ \H(\HH(\GtlT Σ)_{W_H}) $ to deformations of $ \Gtl Σ $.

\begin{remark}
For general $ A_∞ $-categories $ \cat C $, it is unclear whether the map $ i_{W_H}: \H(\HH({\cat C}_2)_{W_H} → \HH({\cat C}_2)_{W_H} $ is a quasi-isomorphism. For gentle algebras, we are however able to check this by hand with the help of the description of Hochschild cohomology from \cite{Paper-I}. In result, the description of the entire $ L_∞ $-structure on $ \HH(\Gtl Σ) $ including the vanishing of the higher brackets is immediate. Moreover, we are able to describe the spectral inclusion $ i_{W_H} ∘ i' $ neatly on the level of Maurer-Cartan elements.
\end{remark}

\begin{remark}
In order to avoid confusion, we shall use the following parameter space $ V $ which does not reference Hochschild cohomology.
\begin{equation*}
V = \frac{ℂ[ℓ_q \running q ∈ M]}{(ℓ_q ℓ_p \running q ≠ p)}.
\end{equation*}
A parameter $ r ∈ V $ can alternatively be written as
\begin{equation*}
r = r_1 + \sum_{q ∈ M, k ≥ 1} r_{q, k} ℓ_q^k, \qquad r = r_1 + \sum_{q ∈ M} R_q (ℓ_q).
\end{equation*}
For each $ q ∈ M $ the series $ R_q (ℓ_q) $ is a polynomial.
\end{remark}

We shall now recall the specific deformations from \cite{Paper-I}. Let $ (B, \mathfrak{m}) $ be a deformation base and let $ r ∈ \mathfrak{m} \htensor V $ be a deformation parameter. Then the construction of \cite{Paper-I} provides an curved $ A_∞ $-deformation $ {}^r μ $ of $ μ $. The deformed curved $ A_∞ $-structure $ {}^r μ $ on $ \GtlT Σ $ is constructed by counting elementary polygons and in addition orbigons which are weighted with $ r_{q, k} $ factors. The curvature of each arc incident at a puncture $ q $ is given by $ R_q (ℓ_q) $, a linear combination of full turns. We can view the difference $ {}^r μ - μ ∈ \mathfrak{m} \htensor \HC(\Gtl Σ) $ as a Maurer-Cartan element of the DGLA $ \mathfrak{m} \htensor \HC(\Gtl Σ) $.

Thanks to \autoref{th:gtl-H2-brackets}, we have an identification between $ V $ and $ \H(\HH(\GtlT Σ)_{W_H})^{\odd} $. In particular, every $ r ∈ V $ can be viewed as an element of the DGLA $ \H(\HH(\GtlT Σ)_{W_H}) $ and every $ r ∈ \mathfrak{m} \htensor V $ can be viewed as a Maurer-Cartan element of the DGLA $ \mathfrak{m} \htensor \H(\HH(\GtlT Σ)_{W_H}) $.

With these preparations in mind, we can state our main theorem.

\begin{theorem}
\label{th:gtl-main}
Let $ (Σ, M) $ be a punctured surface with [NMD] and let $ \Gtl Σ $ be the $ A_∞ $-gentle algebra defined by means of a full arc system. Let $ (B, \mathfrak{m}) $ be a deformation base.
\begin{enumerate}
\item The spectral inclusion $ i_{W_H} ∘ i' $ is a quasi-isomorphism:
\begin{equation*}
i_{W_H} ∘ i': \H(\HH(\GtlT Σ)_{W_H}) \overset{i'}{→} \HH(\GtlT Σ)_{W_H} \overset{i_{W_H}}{→} \HC(\GtlT Σ)_{μ^{≥3}} = \HC(\Gtl Σ).
\end{equation*}
\item The induced morphism $ i_{W_H}^B ∘ {i'}^B $ is a quasi-isomorphism:
\begin{equation*}
i_{W_H}^B ∘ {i'}^B: \mathfrak{m} \htensor \H(\HH(\GtlT Σ)_{W_H}) \overset{{i'}^B}{→} \mathfrak{m} \htensor \HH(\GtlT Σ)_{W_H} \overset{i_{W_H}^B}{→} \mathfrak{m} \htensor \HC(\GtlT Σ)_{μ^{≥3}} = \mathfrak{m} \htensor \HC(\Gtl Σ).
\end{equation*}
\item On the level of Maurer-Cartan elements, $ i_{W_H}^B ∘ {i'}^B $ sends a parameter $ r ∈ \mathfrak{m} \htensor V $ to $ {}^r μ - μ $.
\end{enumerate}
\end{theorem}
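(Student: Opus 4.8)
The plan is to prove the three parts in order, deriving parts (2) and (3) from part (1). For part (1), I would first observe that $i'$, being the inclusion of a minimal model, is automatically an $L_∞$-quasi-isomorphism, so the composition $i_{W_H} ∘ i'$ is a quasi-isomorphism precisely when $i_{W_H}$ is. By definition this reduces to showing that the linear component $i_{W_H}^1$ induces an isomorphism on cohomology. Here the source complex is $(\HH(\GtlT Σ), [W_H, -])$, whose cohomology $\H(\HH(\GtlT Σ)_{W_H})$ was determined in the previous section to be spanned by $ν_1$, $ν_{q, k, \odd}$, $ν_{q, k, \even}$ and $ν_λ$ with $λ ∈ \sporadicG$, while the target complex is $\HC(\Gtl Σ)$ with cohomology $\HH(\Gtl Σ)$ as computed in \cite{Paper-I}.

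The comparison would then proceed by filtering both complexes by length. Since $i^1$ is the inclusion of the cohomology space and each higher term $i^{1+i}(W_H, …, W_H, -)$ with $i ≥ 1$ strictly raises length — the Kadeishvili component $i^{1+i}$ changes the total input length by $-(1+i)$, whereas each of the $i$ inserted copies of $W_H$ has length at least three, for a net increase of at least $2i-1$ — the map $i_{W_H}^1$ is the inclusion $i^1$ plus strictly length-raising corrections, hence unipotent upper-triangular with respect to the filtration. The decisive step is to match the leading (lowest-length) parts of the images of the four families of generators with the basis of $\HH(\Gtl Σ)$ supplied by \cite{Paper-I}: triangularity then gives injectivity, and agreement of the associated graded with \cite{Paper-I}'s basis gives bijectivity. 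I expect this to be the main obstacle, exactly as flagged in the remark preceding the theorem: it is unclear in general whether $i_{W_H}$ is a quasi-isomorphism, and the check relies on the explicit description of $\HH(\Gtl Σ)$ from \cite{Paper-I} together with a careful verification that the lowest-length parts of the image classes are precisely the listed classes, with no spurious classes appearing on either side. Once this is established, the vanishing of all higher brackets on $\H(\HH(\GtlT Σ)_{W_H})$ from \autoref{th:gtl-H2-brackets} transports formally along the quasi-isomorphism to $\HH(\Gtl Σ)$.

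Part (2) would follow from part (1) by base change. The functor $\mathfrak{m} \htensor -$ is exact and compatible with the length filtration, so it carries the quasi-isomorphism of part (1) to $i_{W_H}^B ∘ {i'}^B$. Concretely, one filters by the powers of $\mathfrak{m}$ and applies the comparison theorem on the associated graded, where the functor reduces to tensoring the quasi-isomorphism with the pieces $\mathfrak{m}^i / \mathfrak{m}^{i+1}$; completeness of the filtration then upgrades this to the completed tensor product.

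For part (3), I would compute the Maurer-Cartan pushforward $(i_{W_H}^B ∘ {i'}^B)^{\MC}(r)$ using the identification of $r ∈ \mathfrak{m} \htensor V$ with a Maurer-Cartan element of $\mathfrak{m} \htensor \H(\HH(\GtlT Σ)_{W_H})$ recorded in the preceding remarks. Rather than expanding the full tree formula, I would mimic the uniqueness argument of \autoref{th:higher1-MCpush}: both $(i_{W_H}^B ∘ {i'}^B)^{\MC}(r)$ and ${}^r μ - μ$ are Maurer-Cartan elements of $\mathfrak{m} \htensor \HC(\Gtl Σ)$, i.e. curved $A_∞$-deformations of $μ$, so it suffices to check that they agree on the generating inputs — the curvature terms $R_q(ℓ_q)$ arising from the linear part, and the elementary-polygon and orbigon counts weighted by the $r_{q, k}$ arising from the higher components — and then to invoke the structural rigidity of deformations supported on disk sequences to conclude equality on all angle sequences. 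The delicate point here is confirming that the tree contributions of the spectral inclusion assemble into exactly the orbigon counting that defines ${}^r μ$ in \cite{Paper-I}.
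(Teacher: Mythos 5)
Your proposal follows essentially the same route as the paper: part (1) is reduced to showing that the linear component $ i_{W_H}^1 $ carries the generators $ ν_1, ν_{q,k,\odd}, ν_{q,k,\even}, ν_λ $ to the standard representatives forming the basis of $ \HH(\Gtl Σ) $ from \cite{Paper-I}, part (2) is standard base change along $ \mathfrak{m} \htensor - $, and part (3) is the same rigidity argument of \autoref{th:higher1-MCpush}, comparing two Maurer--Cartan elements of $ \mathfrak{m} \htensor \HC(\Gtl Σ) $ that agree on generating inputs. The length-filtration and triangularity bookkeeping you add in part (1) is simply a more structured way of organizing the paper's direct tracing of the $ 0 $-adic and $ 1 $-adic components.
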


\begin{proof}
We address the three statements in their natural order. For the first statement, note that the inclusion map $ (i')^1 $ sends any cohomology basis element $ ν ∈ \H(\HH(\GtlT Σ)_{W_H}) $ to the copy of itself in $ \HH(\GtlT Σ)_{W_H} $. From there, the inclusion $ (i_{W_H})^1 $ sends $ ν $ to its presentation as a cochain in $ \HC(\GtlT Σ) $. It can be traced directly with the help of the 0-adic and 1-adic components that these are the standard Hochschild cohomology representatives in $ \HC(\Gtl Σ) $ constructed in \cite{Paper-I, Paper-IV}.

For the second statement, recall that tensoring with a deformation base is a classical procedure \cite{Paper-IIA, Manetti}. Since $ i_{W_H} ∘ i' $ is a quasi-isomorphism of $ L_∞ $-algebras, so is its multilinear continuous extension after tensoring with the maximal ideal of the deformation base $ B $.

For the third statement, similar reasoning to \autoref{th:higher1-MCpush} in combination with the 0-adic and 1-adic components along the lines of \cite{Paper-I} yields the statement for Maurer-Cartan elements, and we finish the proof.
\end{proof}

\printbibliography

\bigskip
\begin{tabular}{@{}l@{}}%
    \textsc{Department of Mathematics, University of California, Berkeley, USA}\\
    \textit{jasper.kreeke@berkeley.edu}
  \end{tabular}

\end{document}